\newtheorem{theorem}{Theorem}[section]
\newtheorem{lemma}[theorem]{Lemma}
\theoremstyle{definition}
\newtheorem{definition}[theorem]{Definition}
\newtheorem{example}[theorem]{Example}
\theoremstyle{remark}
\numberwithin{equation}{section}
\def\DJ{\leavevmode\setbox0=\hbox{D}\kern0pt\rlap
 {\kern.04em\raise.188\ht0\hbox{-}}D}
\begin{document}

\title[Positive solution via fixed point results]{Positive solution to fractional thermostat model in Banach spaces via fixed point results}
\author[H.\ Garai, L.K. \ Dey, A. \ Chanda]
{Hiranmoy Garai$^{1}$,  Lakshmi Kanta Dey$^{2}$, Ankush Chanda$^{3}$}

\address{{$^{1}$\,} Hiranmoy Garai,
                    Department of Mathematics,
                    National Institute of Technology
                    Durgapur, India.}
                    \email{hiran.garai24@gmail.com}
\address{{$^{2}$\,} Lakshmi Kanta Dey,
                    Department of Mathematics,
                    National Institute of Technology
                    Durgapur, India.}
                    \email{lakshmikdey@yahoo.co.in}
\address{{$^{3}$\,} Ankush Chanda,
                    Department of Mathematics,
                    National Institute of Technology
                    Durgapur, India.}
                    \email{ankushchanda8@gmail.com}

\subjclass{$47H10$, $54H25$.}
\keywords{Altering distance function, fixed point, Banach space, double sequence, thermostat model.}

\begin{abstract}
The motive behind this manuscript is to set up the existence and uniqueness of a positive solution for a fractional thermostat model for certain values of the parameter $\lambda>0$. We accomplish sufficient
conditions for the existence of a positive solution to the model, and afterwards formulate a non-trivial example to authenticate the grounds of our obtained results. Our findings are based on certain fixed point results of contractions depending on couple of altering distance functions $\phi$ and $\psi$ in the setting of Banach spaces that are discussed in this sequel.
\end{abstract}
 
\maketitle

\setcounter{page}{1}

\centerline{}

\centerline{}

\section{\bf Introduction and Preliminaries}
Metric fixed point theory is extensively employed in different mathematical branches as well as in real world problems originating in applied sciences. The results on fixed points of contractive maps considered on different underlying spaces are mostly applied on the validation of the existence and uniqueness of solutions of functional, differential or integral equations. The plurality of these types of problems elicits the probe for more and better techniques, which is a salient feature of the recent research works in this literature.

The dawning of fixed point theory on a complete metric space is integrated with the Banach contraction principle due to S. Banach \cite{B1}.
\begin{theorem} \label{thm1}
Let $(X,d)$ be a complete metric space and $T$ be a self-mapping on $X$ satisfying  $$d(Tx,Ty)\leq kd(x,y)$$ for all $x,y \in X$ and $k\in [0,1)$.
Then $T$ has a unique fixed point $z\in X$, and for any $x\in X$ the sequence of iterates $(T^nx)$ converges to $z$.
\end{theorem}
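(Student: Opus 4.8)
The plan is to use the classical Picard iteration argument. I would fix an arbitrary point $x_0 \in X$ and define the sequence of iterates by $x_n = T^n x_0$, so that $x_{n+1} = T x_n$ for every $n$. The goal is to show that $(x_n)$ is a Cauchy sequence, invoke completeness of $X$ to extract a limit $z$, verify that $z$ is a fixed point of $T$, and finally establish uniqueness. The heart of the matter is the Cauchy estimate, which is driven entirely by the contraction constant $k < 1$.

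First I would iterate the contraction inequality to control consecutive terms. Applying the hypothesis repeatedly gives $d(x_{n+1}, x_n) = d(Tx_n, Tx_{n-1}) \leq k\, d(x_n, x_{n-1})$, and hence by induction $d(x_{n+1}, x_n) \leq k^n\, d(x_1, x_0)$. Then for any $m > n$ I would telescope via the triangle inequality and sum the resulting geometric series:
\[
d(x_m, x_n) \leq \sum_{j=n}^{m-1} d(x_{j+1}, x_j) \leq \Big(\sum_{j=n}^{m-1} k^j\Big) d(x_1, x_0) \leq \frac{k^n}{1-k}\, d(x_1, x_0).
\]
Since $k \in [0,1)$, the factor $k^n \to 0$ as $n \to \infty$, so the right-hand side can be made arbitrarily small; this shows $(x_n)$ is Cauchy. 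By completeness of $(X,d)$ there exists $z \in X$ with $x_n \to z$.

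Next I would show $z$ is a fixed point. The contraction condition makes $T$ Lipschitz continuous, hence continuous, so passing to the limit in $x_{n+1} = Tx_n$ yields $z = \lim_n x_{n+1} = \lim_n Tx_n = Tz$. (Alternatively, one can estimate $d(Tz, z) \leq d(Tz, x_{n+1}) + d(x_{n+1}, z) \leq k\, d(z, x_n) + d(x_{n+1}, z)$ and let $n \to \infty$, forcing $d(Tz,z) = 0$.) Finally, for uniqueness, suppose $w$ is another fixed point; then $d(z,w) = d(Tz, Tw) \leq k\, d(z,w)$, so $(1-k)\, d(z,w) \leq 0$, which compels $d(z,w) = 0$ and thus $z = w$. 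Since $x_0$ was arbitrary, the sequence of iterates from any starting point converges to this common fixed point $z$. I do not anticipate a genuine obstacle here: the only technical step is the geometric-series bound, and the strict inequality $k < 1$ makes it routine; the argument is robust precisely because the contraction factor is uniform in $x$ and $y$.
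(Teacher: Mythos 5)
Your proof is correct: it is the classical Picard iteration argument (geometric decay of consecutive distances, telescoping to get the Cauchy estimate, completeness, continuity of $T$ to identify the limit as a fixed point, and the contraction inequality for uniqueness). The paper itself states this theorem as a known preliminary, citing Banach, and gives no proof, so there is nothing to compare against; your argument is the standard one and is complete.
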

Because of its inferences and huge usability in mathematical theory, Banach contraction principle has been improved and generalized in metric spaces, partially ordered metric spaces, Banach spaces and many other spaces, see\cite{AEO,AE,AS,BA,C,C1,CSV,C2,E,MK}.

In $1962$, E. Rakotch \cite{R2} proved that the Theorem \ref{thm1} still holds if the constant $k$ is replaced by a contraction monotone decreasing function. He proved the following theorem as a corollary.
\begin{theorem}
Let $(X,d)$ be a complete metric space and $T:X\to X$ be a mapping such that $$d(Tx,Ty) \leq \alpha(x,y)d(x,y)$$ for all $x,y \in X$, where $\alpha$ is a function defined on  $[0,\infty)$ satisfying the following conditions:
\begin{enumerate}
\item[i)] 
$\alpha(x,y)=\alpha(d(x,y)),~~ i.e.,~~ \alpha$ is dependent on the distance of $x$ and $y$ only;
\item[ii)] 
$0\leq \alpha(\tau) <1$ for all $\tau >0$;
\item[iii)] 
$\alpha(\tau)$ is monotonically decreasing function of $\tau.$
\end{enumerate} 
Then $T$ has a unique fixed point.
\end{theorem}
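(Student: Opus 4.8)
The plan is to follow the classical Picard-iteration scheme that proves the Banach contraction principle (Theorem \ref{thm1}), adapting each step to accommodate the variable contraction factor $\alpha$. Fix an arbitrary $x_0 \in X$ and define the iterates $x_{n+1} = Tx_n$. Writing $d_n = d(x_n, x_{n+1})$, the contractive hypothesis together with condition (i) gives $d_{n+1} \le \alpha(d_n)\, d_n$. If $d_n = 0$ for some $n$, then $x_n$ is already a fixed point and there is nothing more to prove; otherwise condition (ii) forces $d_{n+1} < d_n$, so $(d_n)$ is a strictly decreasing sequence of positive reals and hence converges to some $L \ge 0$. To see $L = 0$, I would argue by contradiction: if $L > 0$ then $d_n \ge L$ for every $n$, and the monotonicity in condition (iii) yields $\alpha(d_n) \le \alpha(L) < 1$; iterating $d_{n+1} \le \alpha(L)\, d_n$ gives $d_n \le \alpha(L)^n d_0 \to 0$, contradicting $L > 0$, whence $d_n \to 0$.

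The main obstacle is that $d_n \to 0$ alone does \emph{not} force $(x_n)$ to be Cauchy: unlike the constant-$k$ case, we have no geometric bound that can be summed into a convergent series, and $\alpha(d_n)$ may tend to $1$ as $d_n \to 0$, since condition (ii) says nothing about the value $\alpha(0)$. To circumvent this I would run the standard two-index contradiction argument. Supposing $(x_n)$ is not Cauchy, fix $\varepsilon > 0$ and, for each $k$, choose indices $m_k > n_k \ge k$ with $d(x_{m_k}, x_{n_k}) \ge \varepsilon$ and $m_k$ minimal with this property, so that $d(x_{m_k-1}, x_{n_k}) < \varepsilon$. The triangle inequality together with $d_n \to 0$ then forces $d(x_{m_k}, x_{n_k}) \to \varepsilon$.

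The crucial use of monotonicity comes next: since $d(x_{m_k}, x_{n_k}) \ge \varepsilon$, condition (iii) gives $\alpha(d(x_{m_k}, x_{n_k})) \le \alpha(\varepsilon)$, so that
$$d(x_{m_k}, x_{n_k}) \le d_{m_k} + d(x_{m_k+1}, x_{n_k+1}) + d_{n_k} \le d_{m_k} + \alpha(\varepsilon)\, d(x_{m_k}, x_{n_k}) + d_{n_k}.$$
Letting $k \to \infty$ yields $\varepsilon \le \alpha(\varepsilon)\,\varepsilon$, i.e. $\alpha(\varepsilon) \ge 1$, contradicting condition (ii). Hence $(x_n)$ is Cauchy, and this is precisely where the decreasing nature of $\alpha$ — which bounds it uniformly away from $1$ on any set of distances bounded away from $0$ — does the work that a fixed constant does in the Banach setting.

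Finally, by completeness $x_n \to z$ for some $z \in X$. Since the contractive condition makes $T$ nonexpansive and hence continuous, passing to the limit in $x_{n+1} = Tx_n$ yields $Tz = z$; alternatively one estimates $d(z, Tz) \le d(z, x_{n+1}) + \alpha(d(x_n, z))\, d(x_n, z) \to 0$. For uniqueness, if $z$ and $w$ were distinct fixed points then $d(z,w) = d(Tz, Tw) \le \alpha(d(z,w))\, d(z,w) < d(z,w)$ by condition (ii), an absurdity; thus the fixed point is unique.
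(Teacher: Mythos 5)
Your proof is correct, but there is no internal proof to compare it against: this statement is Rakotch's theorem, which the paper quotes in its preliminaries with a citation to Rakotch's 1962 article and, as the paper itself remarks, was obtained there as a corollary of a more general result; the paper supplies no argument of its own. Judged on its own terms, your argument is complete and each step is valid. The iteration gives $d_{n+1}\le\alpha(d_n)\,d_n<d_n$, and if $d_n\downarrow L>0$ then monotonicity of $\alpha$ gives the uniform bound $\alpha(d_n)\le\alpha(L)<1$, hence $d_n\le\alpha(L)^n d_0\to 0$, a contradiction, so $d_n\to 0$; this is a genuine and necessary use of condition (iii). Your two-index minimal-counterexample argument for the Cauchy property is also sound: minimality of $m_k$ forces $d(x_{m_k},x_{n_k})\to\varepsilon$, and since these distances stay $\ge\varepsilon$, monotonicity supplies the uniform factor $\alpha(\varepsilon)<1$, yielding $\varepsilon\le\alpha(\varepsilon)\,\varepsilon$ in the limit, which is absurd. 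You correctly identify the real danger, namely that $\alpha(\tau)$ may tend to $1$ as $\tau\to 0^{+}$ so that no geometric-series bound exists, and you circumvent it properly; the convergence and uniqueness steps (nonexpansiveness gives continuity of $T$, and $\alpha(d(z,w))<1$ rules out a second fixed point) are routine and correct. It is worth noting that your Cauchy step is the same minimal-index double-sequence device that standardly proves the weak-contraction theorems of Rhoades and of Dutta--Choudhury quoted immediately after this statement in the paper, so your route is the modern self-contained one, in contrast to Rakotch's original derivation via his more general theorem on decreasing contraction functions.
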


In his research article, D.S. Jaggi \cite{J} used the continuity and some different contractive conditions on the mapping to attain the succeeding result.
\begin{theorem}
Let $f$ be a continuous self-map defined on a complete metric space $(X,d)$. Further let, $f$ satisfies the following condition:
$$d(f(x),f(y)) \leq \frac{\alpha d(x,f(x))d(y,f(y))}{d(x,y)} + \beta d(x,y)$$
for all $x,y \in X,$ with $x\neq y$ and for some $\alpha, \beta \in [0,1) $ with $\alpha + \beta <1$. Then $f$ has a unique fixed point in $X$.
\end{theorem}

Thereafter, Khan et al. \cite{KSS} extended and generalized the Banach principle using a control function, known as altering distance function. 
\begin{definition} \label{adf}
A function $\varphi:[0,\infty)\to [0,\infty)$ is called an altering distance function if it satisfies the following conditions:
\begin{enumerate}
\item[i)] 
$\varphi$ is monotone increasing and continuous;
\item[ii)] 
$\varphi(t)=0$ if and only if $t=0$.
\end{enumerate}
\end{definition}
 In \cite{KSS}, the authors also proved the following fixed point theorem by means of the newly originated concept of control functions.
\begin{theorem} \label{thm1.1}
Let $(X,d)$ be a complete metric space and $\psi:[0,\infty)\to [0,\infty)$. Also suppose that $f:X\to X$ is a mapping satisfying $$\psi(d(fx,fy)) \leq a\psi(d(x,y))$$ for all $x,y \in X$ and for some $0\leq a<1$. Then $f$ has a unique fixed point.
\end{theorem}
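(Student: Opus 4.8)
The plan is to mimic the classical proof of the Banach contraction principle (Theorem \ref{thm1}) via Picard iteration, with the control function $\psi$---which I take to be an altering distance function in the sense of Definition \ref{adf}---replacing the raw metric wherever the contractive estimate is invoked. Fix an arbitrary $x_0 \in X$ and define the iterates $x_{n+1} = f x_n$. Writing $d_n = d(x_n, x_{n+1})$, the contractive hypothesis applied to the consecutive pair gives $\psi(d_{n+1}) = \psi(d(f x_n, f x_{n+1})) \leq a\, \psi(d_n)$, and iterating yields $\psi(d_n) \leq a^n \psi(d_0)$. Since $0 \leq a < 1$ we get $\psi(d_n) \to 0$; because $\psi$ is monotone increasing with $\psi(t) = 0$ only at $t = 0$, a routine subsequence argument forces $d_n \to 0$.

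The main obstacle is proving that $(x_n)$ is Cauchy, since---unlike in the metric contraction case---we cannot simply sum a geometric series, as $\psi$ need not be subadditive. I would therefore argue by contradiction. Assuming $(x_n)$ is not Cauchy produces an $\varepsilon > 0$ and indices $m_k > n_k \geq k$ with $d(x_{m_k}, x_{n_k}) \geq \varepsilon$ while $m_k$ is chosen minimal, so $d(x_{m_k - 1}, x_{n_k}) < \varepsilon$. The triangle inequality together with $d_n \to 0$ then squeezes $d(x_{m_k}, x_{n_k}) \to \varepsilon$, and a further triangle-inequality estimate gives $d(x_{m_k - 1}, x_{n_k - 1}) \to \varepsilon$ as well. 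Feeding the pair $(x_{m_k - 1}, x_{n_k - 1})$ into the contraction, taking $k \to \infty$, and using the continuity of $\psi$ produces $\psi(\varepsilon) \leq a\, \psi(\varepsilon)$; since $\varepsilon > 0$ forces $\psi(\varepsilon) > 0$, this contradicts $a < 1$. Hence $(x_n)$ is Cauchy, and completeness yields a limit $z \in X$.

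It remains to identify $z$ as the unique fixed point. Applying the contraction to the pair $(x_n, z)$ gives $\psi(d(x_{n+1}, f z)) \leq a\, \psi(d(x_n, z))$; the right-hand side tends to $0$ by continuity of $\psi$ and $x_n \to z$, while the left-hand side tends to $\psi(d(z, f z))$ for the same reasons, so $\psi(d(z, f z)) = 0$ and therefore $f z = z$. For uniqueness, if $z$ and $w$ are both fixed with $z \neq w$, then $\psi(d(z,w)) = \psi(d(f z, f w)) \leq a\, \psi(d(z,w))$, and since $d(z,w) > 0$ gives $\psi(d(z,w)) > 0$, this again contradicts $a < 1$. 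Thus the fixed point is unique, completing the argument.
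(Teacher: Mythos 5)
Your proof is correct, but there is nothing in the paper to compare it against: the paper states this theorem purely as background, quoting it from Khan--Swalech--Sessa \cite{KSS}, and gives no proof of it (the only detailed proofs in the paper concern the new Banach-space results in Section 2 and the thermostat model in Section 3). Judged on its own merits, your argument is the standard one and it goes through: the geometric decay $\psi(d_n)\leq a^n\psi(d_0)$ plus the properties of an altering distance function force $d_n\to 0$; the proof of the Cauchy property by contradiction (minimal-index pairs $m_k>n_k$, the squeeze $d(x_{m_k},x_{n_k})\to\varepsilon$ and $d(x_{m_k-1},x_{n_k-1})\to\varepsilon$, then $\psi(\varepsilon)\leq a\psi(\varepsilon)$, contradicting $\psi(\varepsilon)>0$) is exactly the right substitute for the geometric-series summation that is unavailable here, since $\psi$ need not be subadditive; and the identification of the limit as the unique fixed point is routine.

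One point in your write-up deserves emphasis: your decision to read $\psi$ as an altering distance function (Definition \ref{adf}) is not merely a convenience but a necessity. As literally stated in the paper, with $\psi$ an arbitrary map of $[0,\infty)$ into itself, the theorem is false: taking $\psi\equiv 0$ makes the contractive hypothesis vacuous, and then $f(x)=x+1$ on $X=\mathbb{R}$ satisfies it while having no fixed point. Monotonicity, continuity, and the condition $\psi(t)=0\iff t=0$ are each used essentially in your argument (monotonicity and the vanishing condition to get $d_n\to 0$, continuity to pass to the limit in the Cauchy contradiction and in identifying $fz=z$), so the hypothesis you restored is exactly what the cited source \cite{KSS} assumes and what the paper evidently intends.
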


Alber and Guerre-Delabriere \cite{AG} introduced the notion of weak contractions in a Hilbert space. 
\begin{definition} \cite{AG} 
Let $(X,d)$ be a metric space. A mapping $T:X \to X$ is called weakly contractive if and only if
$$d(Tx,Ty) \leq d(x,y) - \phi(d(x,y))$$ for all $x,y \in X$, where $\phi$ is an altering distance function.
\end{definition} 
Afterwards, Rhoades \cite{R1} generalized  the weak contraction condition in metric spaces and proved the following  fixed point result in complete metric spaces.
\begin{theorem} \label{thm1.2}
Let $(X,d)$ be a complete metric space. If $T:X \to X$ is a weakly contractive map, then $T$ has a unique fixed point.
\end{theorem}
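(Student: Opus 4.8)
The plan is to run the Picard iteration from an arbitrary starting point and extract the fixed point from its limit. Fix $x_0 \in X$ and set $x_{n+1} = Tx_n$, writing $d_n = d(x_n, x_{n+1})$. The weak contraction condition immediately yields $d_{n+1} \leq d_n - \phi(d_n) \leq d_n$, so $(d_n)$ is non-increasing and bounded below by $0$, hence converges to some $L \geq 0$. Passing to the limit in $d_{n+1} \leq d_n - \phi(d_n)$ and invoking the continuity of $\phi$ gives $L \leq L - \phi(L)$, forcing $\phi(L) = 0$; since $\phi$ is an altering distance function (Definition~\ref{adf}) this entails $L = 0$, that is, $d(x_n, x_{n+1}) \to 0$.

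The crux of the argument—and the step I expect to be the main obstacle—is establishing that $(x_n)$ is Cauchy, since the weak contraction provides no geometric decay rate and the usual telescoping estimate is therefore unavailable. I would proceed by contradiction: if $(x_n)$ is not Cauchy, there exist $\varepsilon > 0$ and subsequences $(x_{m_k})$, $(x_{n_k})$ with $m_k > n_k \geq k$ such that $d(x_{n_k}, x_{m_k}) \geq \varepsilon$, where $m_k$ is chosen minimal so that $d(x_{n_k}, x_{m_k - 1}) < \varepsilon$. Combining these with the triangle inequality and $d_n \to 0$ forces $d(x_{n_k}, x_{m_k}) \to \varepsilon$. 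Applying the weak contraction to the pair $(x_{n_k}, x_{m_k})$ through the estimate
\begin{align*}
d(x_{n_k}, x_{m_k}) &\leq d_{n_k} + d(x_{n_k + 1}, x_{m_k + 1}) + d_{m_k} \\
&\leq d_{n_k} + d_{m_k} + d(x_{n_k}, x_{m_k}) - \phi\bigl(d(x_{n_k}, x_{m_k})\bigr)
\end{align*}
and letting $k \to \infty$, the continuity of $\phi$ yields $\varepsilon \leq \varepsilon - \phi(\varepsilon)$, whence $\phi(\varepsilon) = 0$ and thus $\varepsilon = 0$, a contradiction. Hence $(x_n)$ is Cauchy.

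By completeness of $(X, d)$, the sequence $(x_n)$ converges to some $z \in X$. Since the weak contraction makes $T$ nonexpansive, and therefore continuous, passing to the limit in $x_{n+1} = Tx_n$ gives $z = Tz$, so $z$ is a fixed point. For uniqueness, suppose $z$ and $w$ are both fixed points; then $d(z, w) = d(Tz, Tw) \leq d(z, w) - \phi(d(z, w))$, which forces $\phi(d(z, w)) = 0$ and hence $d(z, w) = 0$ by the defining property of the altering distance function. Therefore $z = w$, and $T$ has a unique fixed point.
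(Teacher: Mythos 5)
Your proof is correct, but there is nothing in the paper to compare it against: Theorem \ref{thm1.2} is stated as background and attributed to Rhoades \cite{R1}; the paper never proves it. Judged on its own, your argument is complete and is essentially the standard (Rhoades-style) proof: monotone decay of $d_n=d(x_n,x_{n+1})$ to $0$ via $\phi(L)\leq 0$, Cauchyness by contradiction using the minimal-index subsequences $(x_{n_k}),(x_{m_k})$ with $d(x_{n_k},x_{m_k})\to\varepsilon$, the observation that weak contractivity implies nonexpansiveness (hence continuity of $T$) to pass to the limit in $x_{n+1}=Tx_n$, and the usual uniqueness step. All the delicate points are handled: the limit $d(x_{n_k},x_{m_k})\to\varepsilon$ is justified by minimality of $m_k$ together with $d_n\to 0$, and the contradiction $\phi(\varepsilon)=0\Rightarrow\varepsilon=0$ uses exactly the defining properties in Definition \ref{adf}. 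It is worth noting the contrast with how the paper proves its own fixed point result, Theorem \ref{thm2}: there, with the different contraction condition $\phi(\|Tx-Ty\|)\leq\phi(\|Tx-y\|)-\psi(\|x-y\|)$, Cauchyness is obtained not by your subsequence-contradiction device but by first showing $\lim_{m\to\infty}\|x_n-x_m\|=0$ for each fixed $n$ and then upgrading this to a double-limit statement via a uniform-convergence argument for an auxiliary sequence of functions. Your technique is the more standard one and is the right tool for the weakly contractive condition here, where consecutive-term distances are directly controlled.
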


In their research paper, Dutta and Choudhury \cite{DC} generalized Theorem \ref{thm1.1} and \ref{thm1.2} to obtain the following theorem.
\begin{theorem}
Let $(X,d)$ be a complete metric space and $T:X\to X$ be a mapping satisfying $$\psi(d(Tx,Ty)) \leq \psi(d(x,y)) - \phi(d(x,y))$$ for all $x,y \in X$, where $\psi ~~ and ~~ \phi$ are two altering distance functions. Then $T$ has a unique fixed point.
\end{theorem}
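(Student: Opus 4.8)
The plan is to run the standard Picard iteration and use the two altering distance functions to control the successive step-sizes. Fix an arbitrary $x_0 \in X$ and define $x_{n+1} = Tx_n$ for $n \geq 0$. Writing $d_n = d(x_n, x_{n+1})$, I would first apply the contractive hypothesis with $x = x_n$ and $y = x_{n+1}$ to obtain $\psi(d_{n+1}) \leq \psi(d_n) - \phi(d_n) \leq \psi(d_n)$; since $\psi$ is monotone increasing, this forces $d_{n+1} \leq d_n$, so $(d_n)$ is nonincreasing and bounded below by $0$, hence converges to some $r \geq 0$. Passing to the limit in $\psi(d_{n+1}) \leq \psi(d_n) - \phi(d_n)$ and invoking the continuity of both $\psi$ and $\phi$ yields $\psi(r) \leq \psi(r) - \phi(r)$, whence $\phi(r) = 0$ and thus $r = 0$ by property (ii) of an altering distance function. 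This establishes $d(x_n, x_{n+1}) \to 0$.

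The main obstacle is to upgrade this to the Cauchy property of $(x_n)$, and I would argue by contradiction. Assuming $(x_n)$ is not Cauchy, there exist $\varepsilon > 0$ and subsequences $(x_{m_k})$, $(x_{n_k})$ with $m_k > n_k \geq k$ such that $d(x_{m_k}, x_{n_k}) \geq \varepsilon$ while $m_k$ is chosen minimal, i.e. $d(x_{m_k - 1}, x_{n_k}) < \varepsilon$. A routine triangle-inequality estimate together with $d(x_n, x_{n+1}) \to 0$ then shows $d(x_{m_k}, x_{n_k}) \to \varepsilon$, and likewise $d(x_{m_k - 1}, x_{n_k - 1}) \to \varepsilon$. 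Applying the contractive condition to the pair $x_{m_k - 1}$, $x_{n_k - 1}$ gives $\psi(d(x_{m_k}, x_{n_k})) \leq \psi(d(x_{m_k - 1}, x_{n_k - 1})) - \phi(d(x_{m_k - 1}, x_{n_k - 1}))$; letting $k \to \infty$ and using continuity produces $\psi(\varepsilon) \leq \psi(\varepsilon) - \phi(\varepsilon)$, so $\phi(\varepsilon) = 0$ and hence $\varepsilon = 0$, a contradiction. Therefore $(x_n)$ is Cauchy, and by completeness it converges to some $z \in X$.

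It remains to verify that $z$ is the unique fixed point. For existence, I would apply the hypothesis to $x = x_n$, $y = z$ to get $\psi(d(x_{n+1}, Tz)) \leq \psi(d(x_n, z)) - \phi(d(x_n, z))$; since $x_n \to z$, the right-hand side tends to $\psi(0) - \phi(0) = 0$, while $x_{n+1} \to z$ gives $d(x_{n+1}, Tz) \to d(z, Tz)$, so continuity of $\psi$ forces $\psi(d(z, Tz)) \leq 0$ and thus $d(z, Tz) = 0$, i.e. $Tz = z$. For uniqueness, if $w$ is another fixed point then $\psi(d(z, w)) = \psi(d(Tz, Tw)) \leq \psi(d(z, w)) - \phi(d(z, w))$ yields $\phi(d(z, w)) = 0$ and hence $z = w$. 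The only genuinely delicate point throughout is the extraction of the subsequences and the verification that the relevant distances converge to $\varepsilon$; everything else reduces to the defining properties of the altering distance functions.
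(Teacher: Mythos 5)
Your proof is correct, but there is nothing in the paper to compare it against: this theorem is stated as a preliminary result, quoted from Dutta and Choudhury \cite{DC}, and the paper supplies no proof of it (the proofs the paper does give concern a different contraction, $\phi(\|Tx-Ty\|)\leq\phi(\|Tx-y\|)-\psi(\|x-y\|)$, in Theorem \ref{thm2}, handled by a rather different double-sequence/uniform-convergence technique). Your argument is essentially the classical one from the cited source: Picard iterates, $d_n=d(x_n,x_{n+1})\to 0$, the Cauchy property by contradiction via minimally chosen indices $m_k$, and then existence and uniqueness of the fixed point from the contractive inequality; every limit passage you make is justified by the continuity of $\psi$ and $\phi$, and the estimates $d(x_{m_k},x_{n_k})\to\varepsilon$ and $d(x_{m_k-1},x_{n_k-1})\to\varepsilon$ are the standard triangle-inequality ones. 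The only step meriting a remark is the inference that $\psi(d_{n+1})\leq\psi(d_n)$ together with $\psi$ monotone increasing ``forces'' $d_{n+1}\leq d_n$: an altering distance function is only required to be non-decreasing, so this is not literally immediate. It is repaired in one line: if $d_{n+1}>d_n$, monotonicity gives $\psi(d_{n+1})=\psi(d_n)$, whence $\phi(d_n)=0$, so $d_n=0$ and $x_n$ is a fixed point of $T$, making $d_{n+1}=d(Tx_n,Tx_{n+1})=0$, a contradiction. Alternatively one can bypass monotonicity of $(d_n)$ entirely: $(\psi(d_n))$ is non-increasing and bounded below, hence convergent, so $\phi(d_n)\leq\psi(d_n)-\psi(d_{n+1})\to 0$; and if $d_n\geq\delta>0$ along a subsequence, then $\phi(d_n)\geq\phi(\delta)>0$ there, a contradiction, so $d_n\to 0$ directly. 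Since the paper itself uses the same loose monotonicity inference in its proof of Theorem \ref{thm2}, this is a matter of polish rather than a gap; with that one-line repair your proof is complete and is the natural argument for this statement.
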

Fractional calculus has been explored for many decades mostly as a pure analytic mathematical
branch. Though in recent times, many authors are showing  a lot of interest in its applications for solving ordinary differential equations. Fractional differential equations appear in different engineering and scientific branches as the mathematical modelling of systems and techniques in the domains of physics, chemistry, aerodynamics, robotics and many more. For a few recent articles in this direction, see \cite{CHS,GAPV,NP,BL1,CW,DG,SD} and the references in that respect.

Considering exclusively positive solutions are effective for several applications, inspired by the aforementioned works, in our draft, we set up an existence and uniqueness theorem to find a positive solution for a fractional thermostat model with a positive parameter. With a view to inspect the solution, we enquire into some new fixed point results in a Banach space by considering a pair of altering distance functions in a more adequate appearance. We also extend our results in a Banach space which is equipped with an arbitrary binary relation and keeps the order preserving property of the mappings. Finally, a suitable non-trivial example is furnished to substantiate the effectiveness of our results.

\section{\bf Fixed Point Results}
This section deals with the results on the existence and uniqueness of fixed points of maps satisfying a  contractive condition with a pair of control functions in a Banach space and also their proofs. Moreover, we formulate an example to elucidate our attained results.

\begin{theorem} \label{thm2}
Let $(X,\|.\|)$ be a Banach space and $C$ be a closed subset of $X$. Let $ T : C \to C$ be a mapping. Assume that, there exist two altering distance functions $\phi, \psi:[0,\infty)\to[0,\infty)$ such that 
\begin{eqnarray}\label{eq1}
\phi(\|Tx-Ty\|)&\leq & \phi(\|Tx-y\|)-\psi(\|x-y\|)
\end{eqnarray}
for all $x,y \in C$. Then $T$ has a unique fixed point in $C$.
\end{theorem}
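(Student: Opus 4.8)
The plan is to run the Picard iteration and exploit the particular shape of the right-hand side of \eqref{eq1}, in which the first argument of $\phi$ is $\|Tx-y\|$ rather than $\|x-y\|$. I would fix an arbitrary $x_0\in C$ and set $x_{n+1}=Tx_n$; since $T$ maps $C$ into itself, the whole orbit lies in $C$ and \eqref{eq1} is available for every consecutive pair. The first step is to substitute the pairing $x=x_n$, $y=x_{n+1}$ into \eqref{eq1}. Because $Tx_n=x_{n+1}=y$, the term $\|Tx_n-y\|$ equals $0$, and using $\phi(0)=0$ from condition (ii) of Definition \ref{adf} the inequality collapses to
\begin{equation*}
\phi(\|x_{n+1}-x_{n+2}\|)\leq -\,\psi(\|x_n-x_{n+1}\|).
\end{equation*}

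The second step is to read off the consequence. Since $\phi$ and $\psi$ are nonnegative, the left-hand side is $\geq 0$ while the right-hand side is $\leq 0$, so both must vanish. From $\phi(\|x_{n+1}-x_{n+2}\|)=0$ and the zero-characterisation of $\phi$ I obtain $x_{n+1}=x_{n+2}$, and from $\psi(\|x_n-x_{n+1}\|)=0$ likewise $x_n=x_{n+1}$. Taking $n=0$ already yields $x_0=x_1=Tx_0$, so $x_0$ is a fixed point of $T$. This makes the usual machinery---proving that $\{x_n\}$ is Cauchy and passing to a limit via the closedness of $C$ and the completeness of $X$---unnecessary, since the orbit is constant from the outset.

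For uniqueness I would take two fixed points $u,v$ and substitute $x=u$, $y=v$ into \eqref{eq1}. Using $Tu=u$ and $Tv=v$ this becomes
\begin{equation*}
\phi(\|u-v\|)\leq \phi(\|u-v\|)-\psi(\|u-v\|),
\end{equation*}
so $\psi(\|u-v\|)\leq 0$; since $\psi(t)=0$ only at $t=0$, this forces $u=v$.

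The one point that genuinely needs care is the handling of the two copies of $T$ on the right of \eqref{eq1}: the whole argument turns on recognising that along the Picard orbit the quantity $\|Tx_n-y\|$ is evaluated at $y=Tx_n$ and is therefore zero. I expect no analytic obstacle beyond careful bookkeeping with the two altering-distance axioms (monotonicity/continuity and the zero-characterisation of $\phi$ and $\psi$); in particular the Banach-space structure enters only through the norm, and even the triangle inequality is not required once the cross term is seen to vanish.
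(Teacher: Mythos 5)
Your proof is correct, and it takes a genuinely different and far more economical route than the paper's. The paper never exploits the pairing $y=Tx$: it substitutes consecutive Picard iterates $x=x_{n-1}$, $y=x_{m-1}$ into \eqref{eq1}, shows that $m\mapsto\|x_n-x_m\|$ is nonincreasing, lets $m\to\infty$ to get $\lim_{m\to\infty}\|x_n-x_m\|=0$ for each fixed $n$, and then runs a lengthy uniform-convergence and double-sequence argument to conclude that $\{x_n\}$ is Cauchy, after which closedness of $C$ and a final appeal to \eqref{eq1} produce the fixed point. Your observation that \eqref{eq1} may be applied with $y=Tx\in C$, so that $\|Tx-y\|=0$ and hence
\begin{equation*}
\phi\bigl(\|Tx-T^{2}x\|\bigr)+\psi\bigl(\|x-Tx\|\bigr)\le\phi(0)=0,
\end{equation*}
collapses all of that machinery: nonnegativity of $\phi,\psi$ forces $\psi(\|x-Tx\|)=0$, hence $Tx=x$, for \emph{every} $x\in C$. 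Your uniqueness argument is identical to the paper's. Beyond brevity, your approach buys a structural insight that the paper's proof obscures: hypothesis \eqref{eq1} forces $T$ to be the identity map on $C$, and combined with the uniqueness argument it forces $C$ to be a singleton (assuming $C\neq\emptyset$, which both you and the paper use implicitly when choosing $x_0$; it would be worth stating explicitly, since for empty $C$ the existence claim fails). So the theorem, while true, has essentially no content---its hypothesis is satisfiable only when $C$ consists of a single point---and this in turn means that condition $(ii)$ of the paper's Theorem \ref{thm4}, which is designed to verify \eqref{eq1} for the thermostat operator on all of $C[0,1]$, can never actually hold. Your shorter proof is therefore not merely a simplification; it is a diagnosis of a degenerate hypothesis.
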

\begin{proof}
Let $x_0\in C$ be arbitrary but fixed. Consider, the iterated sequence $\{x_n\}$ where $x_n=T^nx_0$ for each natural number $n$.

Therefore, by given condition we have,
$$\phi(\|Tx_{n-1}-Tx_{m-1}\|)\leq \phi(\|Tx_{n-1}-x_{m-1}\|)-\psi(\|x_{n-1}-x_{m-1}\|)$$
\begin{equation} \label{eqn1} 
\Rightarrow \phi(\|x_n-x_m\|)\leq \phi(\|x_n-x_{m-1}\|)-\psi(\|x_{n-1}-x_{m-1}\|)
\end{equation}
which implies that,
$$\phi(\|x_n-x_m\|)\leq \phi(\|x_n-x_{m-1}\|)$$
for all $ n,m\in \mathbb{N}$.
Since $\phi$ is monotone increasing, we have
$$\|x_n-x_m\|\leq \|x_n-x_{m-1}\|$$
for all $n,m\in \mathbb{N}$.

Interchanging the role of $x_n$ and $x_m$ in above equation  we get, 
\begin{eqnarray} \label{eq2} 
\|x_n-x_m\| & \leq & \|x_{n-1}-x_m\|
\end{eqnarray}
for all $n,m\in \mathbb{N}$.

Thus for each fixed $n\in \mathbb{N}$, we can conclude that the sequence $\{s^{(n)}_m\}_{m\in \mathbb{N}}$ of non-negative real numbers is monotone decreasing, where, $s^{(n)}_m=\|x_n-x_m\|$ for each $m\in \mathbb{N}$. So, $\{s^{(n)}_m\}_{m\in \mathbb{N}}$ is convergent for each $n\in \mathbb{N}$.

Let, $$ \lim_{m\to \infty}s^{(n)}_m=a^{(n)}$$ for each $n\in \mathbb{N}.$

Now from equation \ref{eqn1}, we have,
$$ \phi(\|x_n-x_m\|)+\psi(\|x_{n-1}-x_{m-1}\|)\leq \phi(\|x_n-x_{m-1}\|).$$ 
Keeping $n$ fixed, taking limit as $m\to \infty$ in both sides of above in-equation and using the continuity of $\phi, \psi$ on $[0,\infty)$, we get
$$\displaystyle \lim_{m\to \infty}\phi(\|x_n-x_m\|)+\displaystyle \lim_{m\to \infty}\psi(\|x_{n-1}-x_{m-1}\|) \leq \displaystyle \lim_{m\to \infty}\phi(\|x_n-x_{m-1}\|) $$
$$\Rightarrow\phi(\displaystyle \lim_{m\to \infty}\|x_n-x_m\|)+\psi(\displaystyle \lim_{m\to \infty}\|x_{n-1}-x_{m-1}\|) \leq \phi(\displaystyle \lim_{m\to \infty}\|x_n-x_{m-1}\|) $$
\begin{eqnarray*} 
& \Rightarrow & \phi(a^{(n)})+\psi(a^{(n-1)}) \leq \phi(a^{(n)})\\
& \Rightarrow & \psi(a^{(n-1)}) \leq 0\\
& \Rightarrow & \psi(a^{(n-1)}) = 0\\
& \Rightarrow & a^{(n-1)}=0 \mbox{ [since, $\psi(t)=0$ if and only if $t=0$]}.
\end{eqnarray*}
Therefore $a^{(n)}=0$ for all $n\in \mathbb{N}$, i.e., $\displaystyle \lim_{m\to \infty}\|x_n-x_m\|=0$ for all $n\in \mathbb{N}$.

Now, we consider the sequence of functions $\{f_m\}$ defined on $C$ by,
\[f_m(x)= \left\{\begin{array}{lr}
\|x_n-x_m\|, & \mbox {if $x =x_n$ for some $n\in \mathbb{N}$};\\
0, & \mbox{otherwise}.
\end{array}
\right.\]

Therefore, $\displaystyle \lim_{m\to \infty}f_m(x)=0$ for all $x\in C.$ Thus the limit function $f$ of the sequence of functions $\{f_m\}$ is given by $$f(x)=0 \mbox{ for all }x\in C.$$

Now, let $$M_m=\displaystyle \sup_{x\in C}|f_m(x)-f(x)|.$$

Therefore,
\begin{eqnarray*}
M_m &=& \displaystyle \sup_{x\in C}|f_m(x)| \mbox{  [since, $f(x)=0$ for all $x\in C$]}\\
&=& \displaystyle \sup_{n}|f_m(x_n)|\\
&=& \displaystyle \sup_{n}\|x_n-x_m\|.
\end{eqnarray*}

But, we know from \ref{eq2} that,
$$\|x_n-x_m\| \leq \|x_{n-1}-x_m\| \leq \|x_{n-2}-x_m\| \leq ... \leq \|x_1-x_m\|, $$ 
which implies that
\begin{eqnarray*}
& &\displaystyle \sup_{n}\|x_n-x_m\| \leq  \|x_1-x_m\|\\
& \Rightarrow & M_m \leq  \|x_1-x_m\|\\
& \Rightarrow & \displaystyle \lim_{m\to \infty} M_m \leq \displaystyle \lim_{m\to \infty}\|x_1-x_m\|=0\\
& \Rightarrow & \displaystyle \lim_{m\to \infty} M_m=0.
\end{eqnarray*}
Let $\epsilon>0$ be arbitrary. Since, $ \displaystyle \lim_{m\to \infty} M_m=0$, so there exists a natural number $N$ such that  
\begin{eqnarray*}
& & |M_m|< \epsilon \mbox{ for all } m\geq N\\
& \Rightarrow & \displaystyle \sup_{x\in C} |f_m(x)-f(x)|<\epsilon \mbox{ for all } m\geq N\\ 
& \Rightarrow & |f_m(x)-f(x)|< \epsilon \mbox{ for all } m\geq N \mbox{ and for all } x\in C\\
& \Rightarrow & |f_m(x)|< \epsilon \mbox{ for all } m\geq N \mbox{ and for all } x\in C.
\end{eqnarray*}
In particular, we have $$|f_m(x_n)|< \epsilon \mbox{ for all } m\geq N \mbox{ and for all } n\in \mathbb{N}.$$
Therefore, we can write, 
\begin{eqnarray}\label{eq3}
\big{|}\|x_n-x_m\|-0\big{|} &<&\epsilon
\end{eqnarray}
for all $n,m \geq N$.

Next, we consider the double sequence $\{s_{nm}\}_{n,m \in \mathbb{N}}$ of real numbers, where $$s_{nm}=\|x_n-x_m\|$$ for all $n,m \in \mathbb{N}.$ Here using \ref{eq3}, we have 
$$|s_{nm}-0|< \epsilon \mbox{ for all } n,m \geq N.$$

This implies the double sequence $\{s_{nm}\}_{n,m \in \mathbb{N}}$ converges to $0$, i.e, $$\displaystyle \lim_{n,m\to \infty}\|x_n-x_m\|=0.$$

Thus, $\{x_n\}$ is a Cauchy sequence in $C$. $C$ being complete, $\{x_n\}$ must converge to some $z\in C.$

Now from \ref{eq1}, we have,
\begin{eqnarray*}
& & \phi(\|x_{n+1}-Tz\|)  \leq  \phi(\|x_{n+1}-z\|)-\psi(\|x_n-z\|)\\
& \Rightarrow & \phi(\|x_{n+1}-Tz\|)  \leq  \phi(\|x_{n+1}-z\|)\\
& \Rightarrow & \displaystyle \lim_{n\to \infty}\phi(\|x_{n+1}-Tz\|) \leq  \displaystyle \lim_{n\to \infty}\phi(\|x_{n+1}-z\|)\\
& \Rightarrow & \phi(\displaystyle \lim_{n\to \infty}\|x_{n+1}-Tz\|) \leq \phi(\displaystyle \lim_{n\to \infty}\|x_{n+1}-z\|) \\
& \Rightarrow & \phi(\displaystyle \lim_{n\to \infty}\|x_{n+1}-Tz\|) \leq \phi(0)=0\\
& \Rightarrow & \phi(\displaystyle \lim_{n\to \infty}\|x_{n+1}-Tz\|) =0\\
& \Rightarrow &\displaystyle \lim_{n\to \infty}\|x_{n+1}-Tz\|=0.
\end{eqnarray*}
The above equation shows that the sequence $\{x_n\}$ converges to $Tz$. Thus, $Tz=z$ and $z$ is a fixed point of $T$.

Finally, we check the uniqueness of the fixed point $z$. To check this, let $z_1$ be another fixed point of $T$, i.e., $Tz_1=z_1$.

From \ref{eq1} and using Definition \ref{adf}, we have
\begin{eqnarray*}
& & \phi(\|Tz-Tz_1\|)\leq \phi(\|Tz-z_1\|)-\psi(\|z-z_1\|)\\
& \Rightarrow & \phi(\|z-z_1\|)+\psi(\|z-z_1\|)\leq \phi(\|z-z_1\|)\\
& \Rightarrow & \psi(\|z-z_1\|)\leq 0\\
& \Rightarrow & \psi(\|z-z_1\|)=0\\
& \Rightarrow & z=z_1.
\end{eqnarray*}
Therefore, $z$ is the only fixed point of $T$.
\end{proof}

Now, we generalize Theorem \ref{thm2} in a Banach space which is equipped with an arbitrary binary relation and state the subsequent theorem.
\begin{theorem}
Let $(X,\|.\|)$ be a Banach space and $\mathscr{R}$ be an equivalence relation on $X$. Assume that $X$ has the property that if $\{x_n\}$ be any sequence in $X$ converging to $z\in X$, then $x_n \mathscr{R} z$ for each natural number $n$. Let $C$ be a closed subset of $X$  and $ T : C \to C$ be a mapping such that $T$ satisfies the following conditions:
\begin{enumerate}
\item[i)] 
$T$ is order-preserving;
\item[ii)] 
$\phi(\|Tx-Ty\|)\leq \phi(\|Tx-y\|)-\psi(\|x-y\|)$  for all $x,y\in C$ such that $x\mathscr{R}y$
\end{enumerate}
where $\phi, \psi :[0,\infty)\to [0,\infty)$ are two altering distance functions. 
 Then $T$ has a unique fixed point in $C$ if  there exists $x_0 \in X$ such that $x_0 \mathscr{R} Tx_0$.
\end{theorem}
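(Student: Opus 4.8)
The plan is to mirror the proof of Theorem \ref{thm2} as closely as possible, invoking the relational hypotheses only to guarantee that every pair of points to which the contractive condition (ii) is applied is $\mathscr{R}$-related. Taking the point $x_0$ (which must lie in $C$ for $Tx_0$ to be defined) with $x_0 \mathscr{R} Tx_0$, I would set $x_n = T^n x_0$ and first establish that consecutive iterates are related: since $x_0 \mathscr{R} x_1$ and $T$ is order-preserving, an easy induction gives $x_n \mathscr{R} x_{n+1}$ for every $n$. Because $\mathscr{R}$ is an equivalence relation, symmetry and transitivity then upgrade this chain to $x_n \mathscr{R} x_m$ for all $n,m \in \mathbb{N}$. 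This is the only place where the full strength of the equivalence-relation hypothesis is needed, and it is precisely what allows condition (ii) to be invoked for the pair $(x_{n-1}, x_{m-1})$.

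With $x_{n-1} \mathscr{R} x_{m-1}$ secured, condition (ii) reproduces inequality \ref{eqn1} verbatim, so the entire Cauchy argument of Theorem \ref{thm2} carries over unchanged: the monotonicity \ref{eq2}, the double-sequence and uniform-convergence reasoning, and the conclusion $\lim_{n,m\to\infty}\|x_n-x_m\|=0$. Completeness of $X$ together with closedness of $C$ then produces a limit $z \in C$ with $x_n \to z$.

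To verify that $z$ is a fixed point I would use the special convergence property of $X$: since $x_n \to z$, we have $x_n \mathscr{R} z$ for every $n$, so (ii) applies to the pair $(x_n, z)$. The computation then proceeds exactly as in Theorem \ref{thm2}, yielding $\lim_{n\to\infty}\|x_{n+1}-Tz\| = 0$ and hence $Tz = z$.

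The main obstacle is uniqueness, since a priori a second fixed point $z_1$ need not lie in the same $\mathscr{R}$-class as $z$, and without $z \mathscr{R} z_1$ condition (ii) is unavailable. I would resolve this once more through the convergence property of $X$: applying it to the eventually-constant sequence $z, z_1, z_1, z_1, \ldots$, which converges to $z_1$, forces its first term to satisfy $z \mathscr{R} z_1$. Condition (ii) then gives $\phi(\|z-z_1\|) \leq \phi(\|z-z_1\|) - \psi(\|z-z_1\|)$, whence $\psi(\|z-z_1\|)=0$ and $z = z_1$. It is worth observing that this same device, applied to a sequence $a, b, b, \ldots$, shows the convergence property in fact relates every pair of points of $X$, so the relational refinement does not genuinely weaken the contractive hypothesis of Theorem \ref{thm2}.
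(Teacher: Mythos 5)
Your proof is correct, and it follows exactly the route the paper intends: the paper's own ``proof'' of this theorem is a single sentence declaring it analogous to Theorem \ref{thm2} and omitting all details, and your writeup is precisely that analogy made rigorous. Two points of your argument go beyond what pure analogy supplies, and both are right. First, the uniqueness step: a second fixed point $z_1$ has no a priori relation to $z$, so condition (ii) cannot be invoked directly; your device of applying the regularity hypothesis on $X$ to the eventually constant sequence $z, z_1, z_1, \ldots$ (which converges to $z_1$, forcing $z \mathscr{R} z_1$) closes exactly the gap that the paper's ``analogous and so omitted'' glosses over. Second, your closing observation is correct and worth stressing: applying the same device to a sequence $a, b, b, \ldots$ shows that the regularity hypothesis forces $a \mathscr{R} b$ for \emph{all} $a,b \in X$, i.e.\ $\mathscr{R} = X \times X$, so condition (ii) in fact holds for every pair, the hypotheses ``$T$ is order-preserving'' and ``there exists $x_0$ with $x_0 \mathscr{R} Tx_0$'' are vacuous, and the theorem collapses to Theorem \ref{thm2} rather than generalizing it. This is a legitimate criticism of the statement itself, not a defect of your proof.
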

\begin{proof}
The proof of this theorem is analogous to the previous one and so omitted.
\end{proof}
In next portion of this section, we present a result which not only gives the guarantee of existence of fixed point but also properly point out the fixed point.
\begin{theorem}
Let $(X,\|.\|)$ be a Banach space and $C$ be a closed subspace of $X$. Let $ T : C \to C$ be a mapping. Also assume that, there exist two altering distance functions $\phi, \psi :[0,\infty)\to [0,\infty)$ such that $T$ satisfies the following conditions:
\begin{enumerate}
\item[$(i)$] $\phi(\|Tx-Ty\|) \leq \phi(\|x-y\|)-\psi(\|x-y\|)$;
\item[$(ii)$] $\phi(\|Tx-y\|) \leq \phi(\|x-y\|)-\psi(\|x-y\|)$
\end{enumerate}
for all $x,y \in C$.
Then the null vector of $X$ is the only fixed point of $T$. 
\end{theorem}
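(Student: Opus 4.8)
The plan is to let the two hypotheses play separate roles: condition $(i)$ furnishes a (unique) fixed point abstractly, while condition $(ii)$ serves only to locate that fixed point at the origin. First I would record two structural facts about the domain. Since $C$ is a closed subspace of the Banach space $(X,\|\cdot\|)$, it is itself complete in the induced metric, and, being a subspace, it contains the null vector, $0\in C$. Next I would observe that condition $(i)$, namely $\phi(\|Tx-Ty\|)\le \phi(\|x-y\|)-\psi(\|x-y\|)$, is exactly the Dutta--Choudhury weak-contraction inequality recalled above (with the names of the two altering distance functions interchanged). Hence, by that theorem applied to the complete metric space $C$, the map $T$ has a unique fixed point $z\in C$, so that $Tz=z$.

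It then remains to show $z=0$. For this I would invoke condition $(ii)$ with the specific choice $x=z$ and $y=0$, which is legitimate precisely because $0\in C$. This gives
$$\phi(\|Tz-0\|)\le \phi(\|z-0\|)-\psi(\|z-0\|),$$
and substituting $Tz=z$ yields $\phi(\|z\|)\le \phi(\|z\|)-\psi(\|z\|)$, whence $\psi(\|z\|)\le 0$. As $\psi$ takes values in $[0,\infty)$, this forces $\psi(\|z\|)=0$, and the defining property of an altering distance function ($\psi(t)=0$ iff $t=0$) then gives $\|z\|=0$, i.e. $z=0$. Thus the unique fixed point produced by $(i)$ is the null vector, and the theorem follows.

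I expect no serious analytic obstacle here; the only genuine subtlety is bookkeeping about which hypothesis does what and why $0$ is an admissible value of $y$. The crucial use of the word \emph{subspace} (rather than merely closed set) is exactly to secure $0\in C$, without which the substitution $y=0$ in $(ii)$ would be illegitimate and the identification of the fixed point would break down. Two minor variants are worth noting as fallbacks. If one prefers a self-contained argument, the existence half can be reproved directly by iterating $x_n=T^nx_0$ from condition $(i)$, exactly along the lines of the proof of Theorem \ref{thm2}, rather than quoting Dutta--Choudhury. Alternatively, one could bypass the general existence theorem entirely: apply $(ii)$ with $x=y=0$ to get $\phi(\|T0\|)\le 0$, hence $T0=0$, so $0$ is a fixed point, and then use $(i)$ with $x=z$, $y=z'$ on two putative fixed points to force $\psi(\|z-z'\|)=0$ and conclude uniqueness. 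I would present the first approach, since it makes the advertised point most transparently, namely that $(i)$ guarantees existence while $(ii)$ pins the fixed point to the origin.
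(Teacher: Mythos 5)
Your proof is correct, but it takes a genuinely different route from the paper's. The paper argues self-containedly: it iterates $x_n=T^nx_0$, uses condition $(i)$ to show that $s_n=\|x_n-x_{n+1}\|$ decreases to $0$, and then --- its key manoeuvre --- applies both hypotheses to the difference sequence $u_n=x_n-x_{n+1}$, which lies in $C$ precisely because $C$ is a subspace and which converges to the null vector $\theta$. Condition $(i)$ gives $Tu_n\to T\theta$, condition $(ii)$ gives $Tu_n\to\theta$, so $T\theta=\theta$ by uniqueness of limits; uniqueness of the fixed point then follows from $(i)$ exactly as in your final step. You instead obtain existence and uniqueness in one stroke from the Dutta--Choudhury theorem recalled in the introduction --- legitimately, since $(i)$ is that inequality with $\phi$ and $\psi$ renamed, and the closed set $C$ is complete --- and then use $(ii)$ only once, at $y=0$, to pin the fixed point to the origin. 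Both arguments are sound; yours is shorter but leans on a quoted theorem, while the paper's is longer but needs nothing beyond the definitions. Your point about the word \emph{subspace} is well taken, and in fact the paper needs it twice over (for $\theta\in C$ and for $u_n\in C$), whereas you need it only for $\theta\in C$.

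One further remark: your second fallback is actually the sharpest proof of the three, and pushing it one step shows the hypotheses are degenerate. Condition $(ii)$ with $x=y$ arbitrary gives $\phi(\|Tx-x\|)\le\phi(0)-\psi(0)=0$, hence $Tx=x$ for \emph{every} $x\in C$; feeding this into $(i)$ then gives $\psi(\|x-y\|)\le 0$, i.e.\ $x=y$, for all $x,y\in C$. So the two conditions can hold simultaneously only when $C=\{\theta\}$ and $T$ is the identity map. The theorem is therefore true but vacuous in substance --- a defect of the statement itself, not of your proof or of the paper's.
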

\begin{proof}
Let $x_0 \in C$ be arbitrary but fixed and consider the iterated sequence $\{x_n\}$ where $x_n=T^nx_0$ for all $n \in \mathbb{N}$.

Let $s_n=\|x_n-x_{n+1}\|$ for all $n \in \mathbb{N}$.

Now, by condition $(i)$ we get
\begin{eqnarray*}
& &\phi(\|Tx_n-Tx_{n+1}\|) \leq \phi(\|x_n-x_{n+1}\|)-\psi (\|x_n-x_{n+1}\|)\\
& \Rightarrow & \phi(\|x_{n+1}-x_{n+2}\|) \leq \phi(\|x_n-x_{n+1}\|)\\
& \Rightarrow & \phi(s_{n+1}) \leq \phi(s_n)\\
& \Rightarrow &  s_{n+1} \leq s_n.
\end{eqnarray*}
This is true for all natural number $n$, which implies that $\{s_n\}$ is a decreasing sequence of non-negative reals and hence this sequence must converge. Let, $$\displaystyle \lim_{n \to \infty}s_n=a.$$
Again, from $(i)$ we have,
\begin{eqnarray*}
& & \phi(s_{n+1}) \leq \phi(s_n)-\psi(s_n)\\
& \Rightarrow & \displaystyle \lim_{n \to \infty}\phi(s_{n+1}) \leq \displaystyle \lim_{n \to \infty}\phi(s_n)-\displaystyle \lim_{n \to \infty}\psi(s_n)\\
& \Rightarrow & \phi(a) \leq \phi(a)-\psi(a)\\
& \Rightarrow & \psi(a) \leq 0\\
& \Rightarrow & \psi(a)=0\\
& \Rightarrow & a=0\\
& \Rightarrow & \displaystyle \lim_{n \to \infty}s_n=0.
\end{eqnarray*}
Therefore,
$$\displaystyle \lim_{n \to \infty}{\|x_n-x_{n+1}-\theta\|}=0.$$
This shows that the sequence $\{u_n\}$ in $C$ converges strongly to $\theta$, where $\theta$ is the null vector in $X$ and $u_n=x_n-x_{n+1}$ for all natural numbers $n$.
Now,
\begin{eqnarray*}
& & \phi(\|Tu_n-T\theta\|) \leq \phi(\|u_n-\theta\|)-\psi(\|u_n-\theta\|)\\
& \Rightarrow & \displaystyle \lim_{n \to \infty}\phi(\|Tu_n-T\theta\|) \leq \displaystyle \lim_{n \to \infty}\phi(\|u_n-\theta\|)-\displaystyle \lim_{n \to \infty}\psi(\|u_n-\theta\|)\\
& \Rightarrow & \displaystyle \lim_{n \to \infty}\phi(\|Tu_n-T\theta\|) \leq 0\\
& \Rightarrow & \displaystyle \lim_{n \to \infty}\phi(\|Tu_n-T\theta\|)=0\\
& \Rightarrow & \phi(\displaystyle \lim_{n \to \infty}\|Tu_n-T\theta\|)=0\\
& \Rightarrow & \displaystyle \lim_{n \to \infty}\|Tu_n-T\theta\|=0.\\
\end{eqnarray*}
Again, by condition $(ii)$ we get,
\begin{eqnarray*}
& & \phi(\|Tu_n-\theta\|) \leq \phi(\|u_n-\theta\|)-\psi(\|u_n-\theta\|)\\
& \Rightarrow & \displaystyle \lim_{n \to \infty}\phi(\|Tu_n-\theta\|) \leq \displaystyle \lim_{n \to \infty}\phi(\|u_n-\theta\|)-\displaystyle \lim_{n \to \infty}\psi(\|u_n-\theta\|)\\
& \Rightarrow & \displaystyle \lim_{n \to \infty}\phi(\|Tu_n-\theta\|) \leq 0\\
& \Rightarrow & \displaystyle \lim_{n \to \infty}\phi(\|Tu_n-\theta\|)=0\\
& \Rightarrow & \phi(\displaystyle \lim_{n \to \infty}\|Tu_n-\theta\|)=0\\
& \Rightarrow & \displaystyle \lim_{n \to \infty}\|Tu_n-\theta\|=0.\\
\end{eqnarray*}
Therefore by the uniqueness of limit, we obtain, $$T\theta=\theta,$$ i.e., $\theta$ is a fixed point of $T$.

Finally, suppose $z$ be another fixed point of $T$. Therefore,
\begin{eqnarray*}
& & \phi(\|Tz-T\theta\|) \leq \phi(\|z-\theta\|)-\psi(\|z-\theta\|)\\
& \Rightarrow & \phi(\|z-\theta\|) \leq \phi(\|z-\theta\|)-\psi(\|z-\theta\|)\\
& \Rightarrow & \psi(\|z-\theta\|)\leq 0\\
& \Rightarrow & \psi(\|z-\theta\|)=0\\
& \Rightarrow & z=\theta.
\end{eqnarray*}
Therefore $\theta$ is the only fixed point of $T$ in $C$.
\end{proof}
\begin{example}
Consider the Banach space $\mathbb{R}$ endowed with the usual norm and define a relation $\mathscr{R}$ on $\mathbb{R}$ by:
for $x,y \in \mathbb{R}$  $x \mathscr{R} y$ if and only if either $x,y \in[-(n+1),-n]$ or $x,y \in [n,n+1]$ for some $n\in \mathbb{N}$ or $x=y=0$. Then clearly $\mathscr{R}$ is an equivalence relation on $\mathbb{R}$.

Now, let  $C=C_1 \cup C_2 \cup C_3$, where $C_1=[-2,-1],~~ C_2=[1,2], ~~ C_3=\{0\}$. Then $C$ is a closed subset of $\mathbb{R}$.

Define, a mapping $T:C \to C$ by 
\[   Tx = \left\{\begin{array}{ll}
        -x, & \text{if } x\in C_1;\\
        0, & \text{if } x\in C_2 \cup C_3.
        \end{array}\right.  \]
Therefore, 
\[   \|Tx-Ty\| = \left\{\begin{array}{ll}
        |x-y|, & \text{if } x,y\in C_1;\\
        |x|, & \text{if } x\in C_1 \text{ and  } y\in C_2 \cup C_3;\\
        0, & \text{if }  x,y \in C_2 \cup C_3.
        \end{array}\right.  \]
\[   \|Tx-y\| = \left\{\begin{array}{ll}
        |x+y|, & \text{if } x\in C_1;\\
        |y|, & \text{if } x\notin C_1.
        \end{array}\right.  \]
Consider the functions $\phi, \psi:[0,\infty)\to [0,\infty)$ defined by
\begin{eqnarray*}
& & \phi(t)=t^2 \\
& & \psi(t)=\frac{t^2}{100000}
\end{eqnarray*}
for all $t\in [0,\infty)$.

Then, clearly $\phi, \psi$ are two altering distance functions. Let, $x,y \in C$ be arbitrary such that $x\mathscr{R} y$. Then the following cases arise.

\text{Case $I$:}  Let $x,y \in C_1$. Then,
\begin{eqnarray*}
& & \phi(\|Tx-Ty\|)+\psi(\|x-y\|)-\phi(\|Tx-y\|)\\
&=& |x-y|^2+\frac{|x-y|^2}{100000}-|x+y|^2\\
&=& -4xy + \frac{(x-y)^2}{100000}\\
&\leq & 0\\
\Rightarrow  \phi(\|Tx-Ty\|)& \leq & \phi(\|Tx-y\|)-\psi(\|x-y\|). 
\end{eqnarray*}
\text{Case $II$:}  Let $x,y \in C_2$. Then,
\begin{eqnarray*}
& & \phi(\|Tx-Ty\|)+\psi(\|x-y\|)-\phi(\|Tx-y\|)\\
&=& 0+ \frac{(x-y)^2}{100000} - y^2\\
&\leq & 0\\
\Rightarrow  \phi(\|Tx-Ty\|)& \leq & \phi(\|Tx-y\|)-\psi(\|x-y\|).
\end{eqnarray*}
\text{Case $III$:}  Let $x,y \in C_3$. Then clearly the equality holds.

Thus,
$$ \phi(\|Tx-Ty\|) \leq  \phi(\|Tx-y\|)-\psi(\|x-y\|) $$
for all $x,y \in C$ with $x \mathscr{R} y$.

Also it is easily seen that $T$ is order preserving and $0$ is the only fixed point of $T$.

\end{example}

\section{\bf Application to Fractional Thermostat Model}
The motivation of this section is to provide an application of the results discussed in this manuscript. For this purpose, we consider the following fractional thermostat model 
\begin{equation} \label{equ2}
^CD^{\alpha}u(t) + \lambda f(t,u(t))=0, t\in [0,1],
\end{equation}
subject to the boundary conditions:
\begin{equation} \label{equ3}
u'(0)=0, \beta ^CD^{\alpha - 1}u(1) + u(\eta)=0,
\end{equation}
where $^CD^{\alpha}$ stands for Caputo fractional derivative  of order $\alpha$, $\lambda$ is a positive constant and $1 <\alpha \leq 2,~~ 0 \leq \eta \leq 1,~~ \beta >0$ such that the following conditions hold:
\begin{enumerate}
\item $\beta \Gamma(\alpha)-(1-\eta)^{(\alpha -1)}>0$;
\item $f: [0,1] \times \mathbb{R} \to \mathbb{R}^+ $ is a continuous function;
\item $u:[0,1] \to \mathbb{R}$ is continuous.
\end{enumerate}
Our aim is to derive some sufficient conditions under which the problem \ref{equ2} with the boundary conditions \ref{equ3} possesses unique positive solution for certain values of the parameter $\lambda$. To proceed further, we first recall the following lemmas.
\begin{lemma} \cite{NP} \label{lem1}
Assume $f\in C[0,1]$. A function $u\in C[0,1]$ is a solution of the boundary value problem 
\begin{equation}
^CD^{\alpha}u(t) + \lambda f(t,u(t))=0, t\in [0,1],
\end{equation}
\begin{equation}
u'(0)=0, \beta ^CD^{\alpha - 1}u(1) + u(\eta)=0,
\end{equation}
if and only if it satisfies the integral equation
\begin{equation}
u(t)= \int_{0}^{1}G(t,s)f(s)ds
\end{equation}
where $G(t,s)$ is the Green's function (depending on $\alpha$) given by
$$G(t,s)= \beta + H_{\eta}(s) - H_t(s)$$
and for $r \in [0,1],$  $H_r(s):[0,1] \to \mathbb{R}$ is defined as $H_r(s)= \frac{(r-s)^{\alpha - 1}}{\Gamma(\alpha)} \mbox{  for } s \leq r$ and $H_r(s)=0 \mbox{  for  } s> r$, i.e., 
\[G(t,s) = \left\{ \begin{array}{lr}
\beta - \frac{(t-s)^{\alpha - 1}}{\Gamma(\alpha)} + \frac{(\eta-s)^{\alpha - 1}}{\Gamma(\alpha)}, & \mbox{if $0\leq s \leq \eta,~~ s\leq t$};\\
\beta + \frac{(\eta-s)^{\alpha - 1}}{\Gamma(\alpha)}, & \mbox{if $0\leq s \leq \eta,~~ s\geq t$};\\
\beta - \frac{(t-s)^{\alpha - 1}}{\Gamma(\alpha)}, & \mbox{if $\eta\leq s \leq 1,~~ s\leq t$};\\
\beta, & \mbox{if $\eta\leq s \leq  1,~~ s\geq t$}.
\end{array}
\right.\]
\end{lemma}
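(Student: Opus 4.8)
The lemma is an inversion statement: it identifies solutions of the two-point fractional boundary value problem with solutions of an integral equation whose kernel is the Green's function $G$. Since the result is cited from \cite{NP}, the plan is to reconstruct the derivation using three standard facts about the Caputo operator of order $\alpha\in(1,2]$: the inversion identity $I^\alpha\,{}^CD^\alpha u(t)=u(t)-c_0-c_1t$ (with $I^\alpha$ the Riemann--Liouville integral and $c_0,c_1$ constants), the semigroup law $I^\mu I^\nu=I^{\mu+\nu}$, and the representations $\frac{d}{dt}I^\alpha g=I^{\alpha-1}g$ and ${}^CD^{\alpha-1}=I^{2-\alpha}\frac{d}{dt}$, the latter being legitimate because $\alpha-1\in(0,1]$. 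For clarity I treat the forcing term as a fixed $f\in C[0,1]$ and invert ${}^CD^\alpha u(t)+f(t)=0$; the thermostat model itself is the case in which $f(t)$ is replaced by $\lambda f(t,u(t))$, so no new ideas are needed there.

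First I would apply $I^\alpha$ to ${}^CD^\alpha u=-f$ to obtain
$$u(t)=c_0+c_1t-\frac{1}{\Gamma(\alpha)}\int_0^t(t-s)^{\alpha-1}f(s)\,ds.$$
Differentiating and letting $t\to0$ kills the integral, so $u'(0)=0$ forces $c_1=0$ and disposes of the first boundary condition cheaply. The second condition $\beta\,{}^CD^{\alpha-1}u(1)+u(\eta)=0$ is the crux. Using that the Caputo derivative annihilates $c_0$ and that $u'=-I^{\alpha-1}f$, I would compute ${}^CD^{\alpha-1}u=I^{2-\alpha}u'=-I^{2-\alpha}I^{\alpha-1}f=-I^1f$, so that ${}^CD^{\alpha-1}u(1)=-\int_0^1 f(s)\,ds$. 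Inserting this together with $u(\eta)=c_0-\frac{1}{\Gamma(\alpha)}\int_0^\eta(\eta-s)^{\alpha-1}f(s)\,ds$ into the boundary relation and solving the resulting linear equation gives
$$c_0=\beta\int_0^1 f(s)\,ds+\frac{1}{\Gamma(\alpha)}\int_0^\eta(\eta-s)^{\alpha-1}f(s)\,ds.$$
I expect this operator-composition step — correctly invoking $I^{2-\alpha}I^{\alpha-1}=I^1$ and confirming that the boundary terms in the Caputo definition genuinely vanish — to be the main obstacle, since a misplaced order of operators or a surviving boundary term would corrupt $c_0$ and hence the kernel.

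Substituting $c_0$ back and writing $H_r(s)=(r-s)^{\alpha-1}/\Gamma(\alpha)$ for $s\le r$ and $H_r(s)=0$ for $s>r$ collapses the three integrals into
$$u(t)=\int_0^1\big[\beta+H_\eta(s)-H_t(s)\big]f(s)\,ds=\int_0^1 G(t,s)f(s)\,ds,$$
and a short case analysis according to whether $s\le\eta$ and whether $s\le t$ reproduces the four displayed branches of $G$. For the converse implication I would begin from this integral equation, differentiate to read off $u'(0)=0$, apply ${}^CD^\alpha$ to recover ${}^CD^\alpha u+f=0$, and re-evaluate ${}^CD^{\alpha-1}u$ at $t=1$ exactly as above to verify the second boundary condition, thereby closing the equivalence.
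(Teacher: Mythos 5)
The paper contains no proof of this lemma: it is imported with a citation from Nieto--Pimentel \cite{NP}, so the only meaningful comparison is with the standard derivation in that source, which your proposal reproduces correctly — apply $I^{\alpha}$ to get $u=c_0+c_1t-I^{\alpha}f$, use $u'(0)=0$ to force $c_1=0$, compute ${}^CD^{\alpha-1}u=-I^{2-\alpha}I^{\alpha-1}f=-I^{1}f$ to solve the second boundary condition for $c_0$, and reassemble the kernel as $\beta+H_{\eta}(s)-H_{t}(s)$, with the converse obtained by applying ${}^CD^{\alpha}$ to the integral representation. Your argument is sound (the operator composition $I^{2-\alpha}I^{\alpha-1}=I^{1}$ is legitimate for $1<\alpha\leq 2$), and your decision to treat the forcing term as a fixed $f\in C[0,1]$ sensibly repairs the statement's internal inconsistency between $\lambda f(t,u(t))$ in the displayed equation and $f(s)$ in the integral equation.
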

\begin{lemma}\cite{SZY}
The function $G(t,s)$ arising in Lemma \ref{lem1} satisfies the following conditions:
\begin{enumerate}
\item[i)] $G(t,s)$ is a continuous map defined on $[0,1]\times [0,1]$;
\item[ii)] for $t,s \in (0,1)$, we have $G(t,s)>0$ .
\end{enumerate} 
\end{lemma}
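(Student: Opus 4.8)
The plan is to verify the two assertions separately, treating continuity first and positivity afterward by splitting into the four branches of the explicit formula for $G$.

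For part (i), I would observe that $G(t,s)=\beta+H_\eta(s)-H_t(s)$, where $\beta$ is constant and $H_\eta(s)$ depends continuously on $s$ alone (being $(\eta-s)^{\alpha-1}/\Gamma(\alpha)$ on $[0,\eta]$ and $0$ beyond, with the two pieces matching at $s=\eta$ because $\alpha-1>0$). Hence the whole burden falls on the joint continuity of $(t,s)\mapsto H_t(s)$ on $[0,1]\times[0,1]$. Off the diagonal this is clear: where $s<t$ the map equals the continuous expression $(t-s)^{\alpha-1}/\Gamma(\alpha)$, and where $s>t$ it is identically $0$. The only delicate point is the diagonal $s=t$, and there I would use that $\alpha-1>0$: as $(t,s)\to(t_0,t_0)$ the surviving term $(t-s)^{\alpha-1}$ tends to $0$, which matches the value $H_{t_0}(t_0)=0$. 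This yields continuity of $H_t(s)$, and thus of $G$, on the whole square.

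For part (ii), I would go through the four cases of the piecewise formula. In the second case ($0\le s\le\eta$, $s\ge t$) and the fourth case ($\eta\le s\le 1$, $s\ge t$) positivity is immediate, since $G=\beta+(\eta-s)^{\alpha-1}/\Gamma(\alpha)\ge\beta>0$ and $G=\beta>0$ respectively. In the third case ($\eta\le s\le t$) one has $G=\beta-(t-s)^{\alpha-1}/\Gamma(\alpha)$; here $0\le t-s\le 1-\eta$ (using $t\le 1$ and $s\ge\eta$), so monotonicity of $x\mapsto x^{\alpha-1}$ gives $G\ge\big(\beta\Gamma(\alpha)-(1-\eta)^{\alpha-1}\big)/\Gamma(\alpha)>0$ by hypothesis (1).

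The remaining first case ($0\le s\le\eta$, $s\le t$), where $G=\beta-\big((t-s)^{\alpha-1}-(\eta-s)^{\alpha-1}\big)/\Gamma(\alpha)$ carries both a positive and a negative fractional-power contribution, is the real obstacle. If $t\le\eta$ then $(t-s)^{\alpha-1}\le(\eta-s)^{\alpha-1}$ and $G\ge\beta>0$ trivially, so the issue is $t>\eta$. There I would exploit that $0<\alpha-1\le 1$, so $x\mapsto x^{\alpha-1}$ is concave on $[0,\infty)$ with value $0$ at the origin and hence subadditive; applying this with the increment $t-\eta$ yields the key estimate $(t-s)^{\alpha-1}-(\eta-s)^{\alpha-1}\le(t-\eta)^{\alpha-1}\le(1-\eta)^{\alpha-1}$, the last step because $t\le 1$. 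Substituting this bound back reduces the case to $G\ge\big(\beta\Gamma(\alpha)-(1-\eta)^{\alpha-1}\big)/\Gamma(\alpha)>0$, again by hypothesis (1), completing the argument. I expect the subadditivity inequality for $x^{\alpha-1}$ to be the one step worth isolating as a small claim, since every other estimate is routine.
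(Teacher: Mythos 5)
Your argument is correct, but note that there is nothing in the paper to compare it against: the paper states this lemma with a citation to the reference of Shen, Zhou and Yang and gives no proof at all, so your write-up supplies a self-contained verification that the paper itself omits. Checking your steps: for (i), the only delicate point is indeed joint continuity of $(t,s)\mapsto H_t(s)$ across the diagonal, and your observation that $\alpha-1>0$ forces $(t-s)^{\alpha-1}\to 0$ there settles it, together with the matching of $H_\eta$ at $s=\eta$. For (ii), cases two and four are trivial from $\beta>0$, case three follows from $0\le t-s\le 1-\eta$ and monotonicity of $x\mapsto x^{\alpha-1}$, and in the genuinely nontrivial first case (with $t>\eta$) your use of subadditivity of the concave function $x\mapsto x^{\alpha-1}$ (valid since $0<\alpha-1\le 1$ and the function vanishes at the origin) gives
\[
(t-s)^{\alpha-1}-(\eta-s)^{\alpha-1}\le (t-\eta)^{\alpha-1}\le (1-\eta)^{\alpha-1},
\]
after which the standing hypothesis $\beta\Gamma(\alpha)-(1-\eta)^{\alpha-1}>0$ closes every case. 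You are right to isolate the subadditivity estimate as the key claim; it is exactly what makes the mixed-sign case collapse to the same bound as case three. Two minor remarks: your argument actually proves the stronger statement $G(t,s)>0$ on all of $[0,1]\times[0,1]$, not just on the open square; and it is worth stating explicitly that the case split in (ii) is exhaustive because the sub-case $t\le\eta$ of case one reduces to $G\ge\beta$, so hypothesis (1) is only ever invoked when $t-s$ can approach $1-\eta$.
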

Now we prove the following lemma.
\begin{lemma}
The Green's function $G(t,s)$ derived in Lemma \ref{lem1} satisfies
$$\displaystyle \sup_{t \in [0,1]}\int_{0}^{1}G(t,s)ds=\beta + \frac{ \eta^{\alpha} }{\Gamma(\alpha + 1)}$$ and $$\displaystyle \inf_{t \in [0,1]}\int_{0}^{1}G(t,s)ds=\beta + \frac{ \eta^{\alpha} - 1}{\Gamma(\alpha + 1)}.$$ 
\end{lemma}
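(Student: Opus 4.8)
The plan is to reduce the problem to computing the single-variable function $g(t) := \int_0^1 G(t,s)\,ds$ in closed form, and then to locate its extrema over the compact interval $[0,1]$. Rather than integrating the four-branch piecewise expression for $G$ directly, I would exploit the compact representation $G(t,s) = \beta + H_\eta(s) - H_t(s)$ supplied in Lemma \ref{lem1}: this linearises the integral and sidesteps any case analysis on the relative position of $t$ and $\eta$.

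First I would split $g(t) = \int_0^1 \beta\,ds + \int_0^1 H_\eta(s)\,ds - \int_0^1 H_t(s)\,ds$. The first term is plainly $\beta$. For a generic parameter $r \in [0,1]$, using that $H_r(s)$ vanishes for $s > r$ and substituting $u = r - s$, one computes $\int_0^1 H_r(s)\,ds = \int_0^r \frac{(r-s)^{\alpha-1}}{\Gamma(\alpha)}\,ds = \frac{r^\alpha}{\alpha\,\Gamma(\alpha)} = \frac{r^\alpha}{\Gamma(\alpha+1)}$, where the last step uses $\Gamma(\alpha+1) = \alpha\,\Gamma(\alpha)$; the exponent $\alpha - 1 \in (0,1]$ guarantees that the integrand is continuous and integrable on $[0,r]$. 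Applying this with $r = \eta$ and with $r = t$ yields the clean formula $g(t) = \beta + \dfrac{\eta^\alpha - t^\alpha}{\Gamma(\alpha+1)}$.

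With $g$ in hand, the optimisation is immediate. Since $\alpha > 0$, the map $t \mapsto t^\alpha$ is strictly increasing on $[0,1]$, so $g$ is strictly decreasing there. As $g$ is continuous on the compact interval $[0,1]$, its supremum is attained at the left endpoint $t = 0$ and its infimum at the right endpoint $t = 1$. Evaluating gives $\sup_{t} g(t) = g(0) = \beta + \frac{\eta^\alpha}{\Gamma(\alpha+1)}$ and $\inf_{t} g(t) = g(1) = \beta + \frac{\eta^\alpha - 1}{\Gamma(\alpha+1)}$, which are exactly the two claimed identities.

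I do not anticipate a genuine obstacle here; the only point requiring a little care is the reduction step, where choosing the additive form $G = \beta + H_\eta - H_t$ in place of the four-branch formula is what renders the integration routine. Once the closed form for $g$ is established, the monotonicity argument and the endpoint evaluations are entirely elementary.
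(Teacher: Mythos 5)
Your proposal is correct, and it reaches the same closed form as the paper, namely $\int_0^1 G(t,s)\,ds = \beta + \frac{\eta^\alpha - t^\alpha}{\Gamma(\alpha+1)}$, followed by the same endpoint evaluation; but the route to that closed form is genuinely different and noticeably cleaner. The paper works directly from the four-branch piecewise formula for $G(t,s)$: it splits $[0,1]$ into three subintervals ($[0,t]$, $[t,\eta]$, $[\eta,1]$ or $[0,\eta]$, $[\eta,t]$, $[t,1]$), treats the cases $t \le \eta$ and $t \ge \eta$ separately, and integrates each branch. Your use of the additive representation $G(t,s) = \beta + H_\eta(s) - H_t(s)$ collapses all of that into a single generic computation $\int_0^1 H_r(s)\,ds = \frac{r^\alpha}{\Gamma(\alpha+1)}$, applied twice, with no case analysis on the relative position of $t$ and $\eta$. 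A second, minor advantage: the paper establishes monotonicity by computing $\varphi'(t) = \frac{-\alpha t^{\alpha-1}}{\Gamma(\alpha+1)}$ and asserting it is negative on all of $[0,1]$, which is not quite right at $t=0$ (the derivative vanishes there since $\alpha > 1$); your argument via the strict monotonicity of $t \mapsto t^\alpha$ avoids differentiation altogether and has no such edge case. Both proofs are valid; yours trades the paper's explicit bookkeeping for a structural observation about $G$, which is exactly what the representation in Lemma \ref{lem1} is for.
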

\begin{proof}
Let us consider the function $\varphi $ defined on $[0,1]$ by 
$$\varphi(t)=\int_{0}^{1}G(t,s)ds $$ for all $t \in [0,1]$.

Now, for $t \in [0,1]$ and $t \leq \eta, ~~s\geq \eta,$  we have $t \leq s$ and thus,
\begin{eqnarray*}
 \varphi(t)& = & \int_{0}^{1}G(t,s)ds\\
 & = & \int_{0}^{\eta}G(t,s)ds + \int_{\eta}^{1}G(t,s)ds\\
 & = & \int_{0}^{t}G(t,s)ds + \int_{t}^{\eta}G(t,s)ds + \int_{\eta}^{1}G(t,s)ds\\
 & = & \int_{0}^{t}\{\beta - \frac{(t-s)^{\alpha - 1}}{\Gamma(\alpha)} + \frac{(\eta-s)^{\alpha - 1}}{\Gamma(\alpha)}\}ds + \int_{t}^{\eta}\{\beta + \frac{(\eta-s)^{\alpha - 1}}{\Gamma(\alpha)}\}ds + \int_{\eta}^{1}\beta ds\\
 & =& \beta + \frac{ \eta^{\alpha} - t^{\alpha}}{\Gamma(\alpha + 1)}.
\end{eqnarray*}
Again, for $t \in [0,1]$ and $t \geq \eta, ~~s\leq \eta,$  we have $t \geq s$ and so, 
\begin{eqnarray*}
 \varphi(t)& = & \int_{0}^{1}G(t,s)ds\\
 & = & \int_{0}^{\eta}G(t,s)ds + \int_{\eta}^{1}G(t,s)ds\\
 & = & \int_{0}^{\eta}G(t,s)ds + \int_{\eta}^{t}G(t,s)ds + \int_{t}^{1}G(t,s)ds\\
 & = & \int_{0}^{\eta}\{\beta - \frac{(t-s)^{\alpha - 1}}{\Gamma(\alpha)} + \frac{(\eta-s)^{\alpha - 1}}{\Gamma(\alpha)}\}ds + \int_{\eta}^{t}\{\beta - \frac{(t-s)^{\alpha - 1}}{\Gamma(\alpha)}\}ds + \int_{t}^{1}\beta ds\\
 & =& \beta + \frac{ \eta^{\alpha} - t^{\alpha}}{\Gamma(\alpha + 1)}.
\end{eqnarray*}
Thus, from the above calculations we get, $$\varphi(t)=\beta + \frac{ \eta^{\alpha} - t^{\alpha}}{\Gamma(\alpha + 1)}$$ for all $ t \in [0,1]$.

Therefore, $$ \varphi'(t)=\frac{-\alpha t^{\alpha -1}}{\Gamma(\alpha + 1)}<0$$ for all $t\in [0,1].$

This implies that the function $\varphi $ is a decreasing function on $[0,1]$. So, 
\begin{eqnarray*}
\displaystyle \sup_{t \in [0,1]}\int_{0}^{1}G(t,s)ds & = & \displaystyle \sup_{t\in [0,1]} \varphi(t)\\
& = & \phi(0)\\
& = & \beta + \frac{ \eta^{\alpha} }{\Gamma(\alpha + 1)},
\end{eqnarray*}
and 
\begin{eqnarray*}
\displaystyle \inf_{t \in [0,1]}\int_{0}^{1}G(t,s)ds & = & \displaystyle \inf_{t\in [0,1]} \varphi(t)\\
& = & \phi(1)\\
& = & \beta + \frac{ \eta^{\alpha} - 1}{\Gamma(\alpha + 1)}.
\end{eqnarray*}
This completes the proof of the lemma.
\end{proof}
\begin{lemma} \label{lmn4}
For the Green's function  $G(t,s)$ derived in Lemma \ref{lem1}  $$G(t,s) \leq \beta + \frac{\eta^{\alpha - 1}}{\Gamma(\alpha)}$$ for all $t,s \in [0,1]$ holds.
\end{lemma}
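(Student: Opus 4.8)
The plan is to bound $G(t,s)$ separately on each of the four regions appearing in the piecewise definition from Lemma~\ref{lem1}, exploiting the fact that the standing hypothesis $1<\alpha\le 2$ forces $\alpha-1\in(0,1]$. The single analytic ingredient I would use is that on $[0,\infty)$ the map $x\mapsto x^{\alpha-1}$ is nonnegative and monotone increasing; everything else is elementary case-checking, and the supremum of the positive contribution is attained at $s=0$.

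First I would dispose of the two branches with $\eta\le s$. In the region $\eta\le s\le 1,\ s\ge t$ one has $G(t,s)=\beta$, and in the region $\eta\le s\le 1,\ s\le t$ one has $G(t,s)=\beta-\frac{(t-s)^{\alpha-1}}{\Gamma(\alpha)}\le\beta$, since the subtracted term is nonnegative. As $\eta^{\alpha-1}\ge 0$, in both situations $G(t,s)\le\beta\le\beta+\frac{\eta^{\alpha-1}}{\Gamma(\alpha)}$.

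Next I would handle the two branches with $s\le\eta$. In the region $0\le s\le\eta,\ s\ge t$ we have $G(t,s)=\beta+\frac{(\eta-s)^{\alpha-1}}{\Gamma(\alpha)}$, while in the region $0\le s\le\eta,\ s\le t$ the extra term $-\frac{(t-s)^{\alpha-1}}{\Gamma(\alpha)}$ is nonpositive and may be discarded, so again $G(t,s)\le\beta+\frac{(\eta-s)^{\alpha-1}}{\Gamma(\alpha)}$. Since $s\ge 0$ gives $0\le\eta-s\le\eta$, monotonicity of $x\mapsto x^{\alpha-1}$ yields $(\eta-s)^{\alpha-1}\le\eta^{\alpha-1}$, whence $G(t,s)\le\beta+\frac{\eta^{\alpha-1}}{\Gamma(\alpha)}$. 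Combining the four estimates establishes the inequality for all $t,s\in[0,1]$.

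The only point requiring any care is the monotonicity step for the $s\le\eta$ branches, which relies on $\alpha-1>0$; this is exactly guaranteed by the hypothesis $\alpha>1$. Were $\alpha=1$ permitted, the power term would collapse to a constant, but under the stated assumptions the estimate $(\eta-s)^{\alpha-1}\le\eta^{\alpha-1}$ is immediate, so no genuine obstacle arises and the argument is purely a matter of organizing the four cases.
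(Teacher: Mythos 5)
Your proof is correct, but it takes a genuinely different route from the paper. The paper argues via calculus: it computes $\frac{\partial G}{\partial t}$, concludes $G(t,s)\le G(0,s)$ because $G$ is decreasing in $t$ for each fixed $s$, then computes $\frac{\partial G(0,s)}{\partial s}$ and concludes $G(0,s)\le G(0,0)=\beta+\frac{\eta^{\alpha-1}}{\Gamma(\alpha)}$, so the maximum is located at the corner $(0,0)$. You instead bound each of the four branches of the piecewise formula directly, discarding the nonpositive term $-\frac{(t-s)^{\alpha-1}}{\Gamma(\alpha)}$ and then using only the monotonicity of $x\mapsto x^{\alpha-1}$ to get $(\eta-s)^{\alpha-1}\le\eta^{\alpha-1}$. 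Your version is more elementary and, in one respect, cleaner: the paper's derivative $-\frac{(\alpha-1)(t-s)^{\alpha-2}}{\Gamma(\alpha)}$ is unbounded (indeed $G$ is not differentiable in $t$ across $t=s$) when $\alpha<2$, a point the paper glosses over, whereas your case-checking never differentiates anything. What the paper's approach buys is slightly more information --- it identifies where the supremum is attained and reuses the same monotonicity-in-$t$ observation that underlies the preceding lemma on $\int_0^1 G(t,s)\,ds$ --- but for the stated inequality your argument is complete and sound; the four cases cover all of $[0,1]^2$ and agree on overlaps.
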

\begin{proof}
From the formulation of $G(t,s)$ we get
\[\frac{\partial G(t,s)}{\partial t} = \left\{ \begin{array}{lr}
- \frac{(\alpha - 1)(t-s)^{\alpha - 2}}{\Gamma(\alpha)} , & \mbox{if $0\leq s \leq \eta,~~ s\leq t$};\\
0, & \mbox{if $0\leq s \leq \eta,~~ s\geq t$};\\
 - \frac{(\alpha - 1)(t-s)^{\alpha - 2}}{\Gamma(\alpha)}, & \mbox{if $\eta\leq s \leq 1,~~ s\leq t$};\\
0, & \mbox{if $\eta\leq s \leq  1,~~ s\geq t$}.
\end{array}
\right.\]
Therefore, for any fixed $s \in [0,1]$, we have
$$\frac{\partial G(t,s)}{\partial t} \leq 0$$ for each $t \in [0,1]$ and thus $G(t,s)$ is a decreasing function of $t$ on $[0,1]$ for each fixed $s \in [0,1].$

Thus, 
\begin{equation} \label{eqq1}
G(t,s)  \leq  G(0,s) \mbox{  for all } t,s \in [0,1]
\end{equation}
where
\[ G(0,s) =\left\{ \begin{array}{lr}
\beta + \frac{(\eta - s)^{\alpha - 1}}{\Gamma(\alpha)}, & \mbox{if $0\leq s \leq \eta$};\\
\beta, & \mbox{if $\eta \leq s \leq 1$}.\\
\end{array}
\right.\]
Therefore,
\[ \frac{\partial G(0,s)}{\partial s} =\left\{ \begin{array}{lr}
\frac{- (\alpha - 1)(\eta - s)^{\alpha - 2}}{\Gamma(\alpha)}, & \mbox{if $0\leq s \leq \eta$};\\
0, & \mbox{if $\eta \leq s \leq 1$}.\\
\end{array}
\right.\]
This shows that $\frac{\partial G(0,s)}{\partial s} \leq 0$ for all $s \in [0,1]$ and so $G(0,s)$ is a decreasing function of $s$ on $[0,1].$ Thus,
\begin{equation} \label{eqq2}
G(0,s) \leq G(0,0) = \beta + \frac{\eta^{\alpha - 1}}{\Gamma(\alpha)} \mbox{ for  all  } s\in [0,1].
\end{equation} 
From equations \ref{eqq1} and \ref{eqq2} we get
$$G(t,s) \leq \beta + \frac{\eta^{\alpha - 1}}{\Gamma(\alpha)}$$ for all $t,s \in [0,1].$
\end{proof}
Now we prove the following theorem concerning the existence and uniqueness of a positive solution to the fractional thermostat model.
\begin{theorem} \label{thm4}
Let us consider the fractional thermostat model with parameter $\lambda>0$ given by equations \ref{equ2} and \ref{equ3}. Assume that the following conditions hold:
\begin{enumerate}
\item[$(i)$] $\beta \Gamma(\alpha + 1)+ \eta^{\alpha}>1;$
\item[($ii)$] for all $s\in [0,1]$, 
 $$\lambda |f(s,u(s))-f(s,v(s))| \leq \lambda |f(s,u(s))| - \lambda \displaystyle \sup_{t\in [0,1]} |v(t)| -\psi( \displaystyle \sup_{t\in [0,1]} |u(t) - v(t)|)$$
 for some altering distance function $\psi $ and for all real valued continuous functions $u(s), v(s)$ defined on $[0,1];$
\item[($iii)$] $f$ is non-decreasing with respect to the second argument and there exists $t_0 \in (0,1)$ such that $f(t_0,0)>0$. 
\end{enumerate}
Then the fractional thermostat model with parameter $\lambda$ given by equations \ref{equ2} and \ref{equ3} has a unique positive solution for $\lambda \geq \frac{1}{k}$, where $ k= \beta + \frac{ \eta^{\alpha} - 1}{\Gamma(\alpha + 1)}.$
\end{theorem}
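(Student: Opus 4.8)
The plan is to recast the boundary value problem \ref{equ2}--\ref{equ3} as a fixed point equation and then invoke Theorem \ref{thm2}. Working in the Banach space $X=C[0,1]$ equipped with the supremum norm $\|u\|=\sup_{t\in[0,1]}|u(t)|$, I would let $C$ be the closed cone of non-negative continuous functions and define the operator $T:C\to C$ by
$$(Tu)(t)=\lambda\int_0^1 G(t,s)f(s,u(s))\,ds.$$
By Lemma \ref{lem1}, $u$ solves the model precisely when $u=Tu$; moreover, since $G\ge 0$, $f\ge 0$ and $\lambda>0$, the operator maps $C$ into $C$, so any fixed point is automatically non-negative, and $C$ being closed in a Banach space is complete.

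The heart of the argument is verifying the contractive inequality \ref{eq1} with $\phi$ taken to be the identity. First I would record that condition $(i)$ is equivalent to $k=\beta+\frac{\eta^{\alpha}-1}{\Gamma(\alpha+1)}>0$, and that by the lemma computing $\int_0^1 G(t,s)\,ds$ one has $k=\inf_{t}\int_0^1 G(t,s)\,ds$. Fixing $u,v\in C$ and using $G\ge 0$ together with $|f|=f$ (as $f\ge 0$), I would multiply hypothesis $(ii)$ by $G(t,s)$ and integrate to obtain, for each $t$,
$$|(Tu)(t)-(Tv)(t)|\le \lambda\int_0^1 G(t,s)|f(s,u(s))-f(s,v(s))|\,ds\le (Tu)(t)-\bigl(\lambda\|v\|+\psi(\|u-v\|)\bigr)\int_0^1 G(t,s)\,ds.$$
Bounding $\int_0^1 G(t,s)\,ds\ge k$ (the coefficient being non-negative) and taking the supremum over $t$, together with $\sup_t (Tu)(t)=\|Tu\|$ since $Tu\ge 0$, this gives $\|Tu-Tv\|\le \|Tu\|-k\lambda\|v\|-k\psi(\|u-v\|)$.

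To reach the exact form required by Theorem \ref{thm2}, I would apply the triangle inequality $\|Tu\|\le \|Tu-v\|+\|v\|$ and then use the hypothesis $\lambda\ge 1/k$, i.e.\ $k\lambda\ge 1$, to discard the term $(1-k\lambda)\|v\|\le 0$. This yields $\|Tu-Tv\|\le \|Tu-v\|-k\psi(\|u-v\|)$. Since $k>0$, the function $\Psi:=k\psi$ is again an altering distance function, so with $\phi=\mathrm{id}$ and $\Psi$ the pair satisfies \ref{eq1}, and Theorem \ref{thm2} furnishes a unique fixed point $u^{*}\in C$, the unique solution of the model. Finally, to upgrade non-negativity to strict positivity I would invoke condition $(iii)$: since $u^{*}\ge 0$ and $f$ is non-decreasing in its second argument, $f(s,u^{*}(s))\ge f(s,0)$, and the continuity of $f$ with $f(t_0,0)>0$ makes the integrand strictly positive on a neighbourhood of $t_0$; as $G(t,s)>0$ on $(0,1)\times(0,1)$, the integral $u^{*}(t)$ is strictly positive for $t\in(0,1)$.

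The main obstacle I anticipate is the contractive verification: one must carefully transport the pointwise hypothesis $(ii)$ through the Green's-function integral and then engineer the algebra so that the infimum bound $k$ and the constraint $\lambda\ge 1/k$ cooperate to absorb the stray $\|v\|$ term and reproduce the $\|Tu-v\|$ on the right-hand side, all while keeping $k\psi$ a legitimate altering distance function, which is exactly where condition $(i)$ enters.
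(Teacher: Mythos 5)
Your proposal is correct and follows essentially the same route as the paper: recast the model as a fixed point problem for $Tu(t)=\lambda\int_0^1 G(t,s)f(s,u(s))\,ds$, integrate hypothesis $(ii)$ against the Green's function, use $\inf_{t\in[0,1]}\int_0^1 G(t,s)\,ds=k$ together with $\lambda k\ge 1$ to obtain the hypothesis of Theorem \ref{thm2} with $\phi=\mathrm{id}$ and $\psi_1=k\psi$, and then derive positivity from condition $(iii)$. The only differences are cosmetic: you work on the closed cone of non-negative functions rather than all of $C[0,1]$, you absorb the $\|v\|$ term by a norm-level triangle inequality instead of the paper's pointwise estimate $\lambda k\|v\|\ge|v(t)|$, and you prove strict positivity on $(0,1)$ directly rather than by the paper's contradiction argument.
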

\begin{proof}
Consider the Banach space $C[0,1]$ of all real-valued continuous functions defined on $[0,1]$ equipped with the sup norm.

Define a mapping $T:C[0,1] \to C[0,1]$ by
$$Tu(t)=\lambda \int_{0}^{1}G(t,s)f(s,u(s))ds$$ for all $u \in C[0,1]$, where $G(t,s)$ is defined as in Lemma \ref{lem1}.

From Lemma \ref{lem1}, it is obvious  that the thermostat model \ref{equ2} and \ref{equ3} has $u(t)$ as a solution if and only if $u(t)$ is a fixed point of $T$.

Now, by condition $(ii)$ we have,
\begin{eqnarray*}
\lambda |f(s,u(s))-f(s,v(s))| &\leq & \lambda |f(s,u(s))| - \lambda \displaystyle \sup_{t\in [0,1]} |v(t)| -\psi(\displaystyle \sup_{t\in [0,1]} |u(t) - v(t)|)\\
 & = & \lambda |f(s,u(s))| - \lambda \|v\|  -\psi(\|u-v\|).
\end{eqnarray*}
Multiplying both sides by $|G(t,s)|$, we get
\begin{eqnarray*}
\lambda |f(s,u(s))-f(s,v(s))||G(t,s)| &\leq & \lambda |f(s,u(s))| |G(t,s)| - \lambda \|v\||G(t,s)|\\ & &-\psi(\|u-v\|)|G(t,s)|\\
\Rightarrow \lambda\int_{0}^{1} |f(s,u(s))-f(s,v(s))| |G(t,s)|ds & \leq & \lambda\int_{0}^{1} |f(s,u(s))| |G(t,s)|ds \\ & &- \lambda \int_{0}^{1}\|v\|  |G(t,s)| ds-\int_{0}^{1}\psi(\|u-v\|)|G(t,s)|ds\\
& = & \lambda\int_{0}^{1} G(t,s)f(s,u(s)) ds - \lambda \|v\|\int_{0}^{1}  G(t,s) ds \\ & & - \psi(\|u-v\|)\int_{0}^{1}G(t,s)ds\\
& \leq & \lambda\int_{0}^{1} G(t,s)f(s,u(s)) ds   - \lambda \|v\|\displaystyle \inf_{t\in [0,1]}\int_{0}^{1}  G(t,s) ds \\ && - \psi(\|u-v\|)\displaystyle \inf_{t\in [0,1]}\int_{0}^{1}G(t,s)ds\\
& \leq & \lambda\int_{0}^{1} G(t,s)f(s,u(s)) ds  -  \lambda k\|v\| - k\psi(\|u-v\|).\\
\end{eqnarray*}
Now if $\lambda k \geq 1$, then  from the above in-equation we obtain,
\begin{eqnarray}\label{h}
 \lambda\int_{0}^{1} |f(s,u(s))-f(s,v(s))| |G(t,s)|ds  & \leq & \lambda\int_{0}^{1} G(t,s)f(s,u(s)) ds - \|v\|  - k\psi(\|u-v\|)\nonumber \\
& \leq & \lambda\int_{0}^{1} G(t,s)f(s,u(s)) ds - |v(t)| - k\psi(\|u-v\|)\nonumber \\
& \leq &   |\lambda \int_{0}^{1} G(t,s)f(s,u(s)) ds - v(t)|\nonumber\\
& & - k\psi(\|u-v\|).
\end{eqnarray}

Therefore using Equation \ref{h} we get,
\begin{eqnarray*}
|Tu(t) - Tv(t)|& = & |\lambda \int_{0}^{1}G(t,s)f(s,u(s))ds - \lambda \int_{0}^{1}G(t,s)f(s,v(s))ds|\\
& =& |\lambda \int_{0}^{1}G(t,s)(f(s,u(s))- f(s,v(s)))ds|\\
& \leq &  |\lambda \int_{0}^{1} G(t,s)f(s,u(s)) ds - v(t)| - k\psi(\|u-v\|).\\ 
\end{eqnarray*}
The above inequality holds for all $t\in [0,1]$ and so we have,
\begin{eqnarray}
\displaystyle \sup_{t\in [0,1]}|Tu(t) - Tv(t)|& \leq & \displaystyle \sup_{t\in [0,1]}\lambda |\int_{0}^{1} G(t,s)f(s,u(s)) ds - v(t)| - k\psi(\|u-v\|) \nonumber\\ 
\Rightarrow \|Tu - Tv\| & \leq & \|Tu - v\|  - k\psi(\|u-v\|).\label{eqn4}
\end{eqnarray}
It is easily perceived by condition $(i)$ that, $k>0.$

Define two functions $\phi, \psi_1: [0,\infty)\to [0,\infty)$ by 
$$\phi(t)=t ~~ and $$ 
$$ \psi_1(t)=k\psi(t)$$
for all $t \in [0,\infty).$ Then one can easily verify that $\phi, \psi_1$ are two altering distance functions and also from equation \ref{eqn4} we get,
\begin{equation} \label{eqn5}
 \phi(\|Tu - Tv\|)  \leq  \phi(\|Tu - v\|)  - \psi_1(\|u-v\|).
\end{equation}
The above inequality holds for all $u,v \in C[0,1]$ and so by Theorem \ref{thm2}, $T$ has a unique fixed point $u(t)$, say, in $C[0,1]$.

Note that, Equation \ref{eqn5} holds if $\lambda k \geq 1$. So, $T$ has $u(t)$ as a fixed point if $\lambda k \geq 1$, i.e.,  $u(t)$ is a solution of the thermostat model \ref{equ2} and \ref{equ3} if $\lambda k \geq 1$, i.e., $\lambda \geq  \frac{1}{k}$.

Now we have $\lambda >0,~~ G(t,s) >0$ and $f(s,u(s)) \geq 0$ for all $t,s \in [0,1]$. Therefore it is clear that $$\lambda  \int_{0}^{1} G(t,s)f(s,u(s))ds \geq 0$$ for all $t\in [0,1]$. This means that $Tu(t) \geq 0$ for all $t\in [0,1]$ and which leads us to the fact that $u(t) \geq 0$ for all $t\in [0,1].$

Finally, we show that the unique solution $u(t)$ is always positive. To show this, first we show that the zero function $0$ is not a fixed point of $T$.

Suppose in contrary, assume that the zero function $0$ is a fixed point of $T$. Then, we have $$0=\lambda \int_{0}^{1}G(t,s)f(s,0)ds,$$ for all $t \in[0,1].$
Since $G(t,s)f(s,0) \geq 0$ for all $t\in [0,1]$ and for all $s\in [0,1]$, we have
$$G(t,s)f(s,0) = 0,$$ for all $t\in [0,1]$ and for almost all $s\in [0,1]$. 
This fact leads us to 
\begin{equation} \label{eqn6}
f(s,0)=0 \mbox{ for  almost  all  } s\in [0,1].
\end{equation}
By condition $(iii)$, there exists $t_0 \in (0,1)$ such that $f(t_0,0)>0$. Again, since $f$ is continuous at $(t_0,0)$, there exists a subset $A$ of $[0,1]$ of positive Lebesgue measure such that $f(s,0)>0$ for all $s\in A$. This is a contradiction to \ref{eqn6}. So the zero function $0$ is not a fixed point of $T$.

Now, let $u(t_1)=0$ for some $t_1\in (0,1)$. Therefore we have, 
\begin{eqnarray}\label{eq5}
\int_{0}^{1}G(t_1,s)f(s,u(s))ds &= & 0.
\end{eqnarray}
But $u(s) \geq 0$ for all $s\in [0,1]$ and $f$ is non-decreasing with respect to the second argument. Hence
\begin{eqnarray}\label{eq6}
0 \geq \int_{0}^{1}G(t_1,s)f(s,u(s))ds & \geq & \int_{0}^{1}G(t_1,s)f(s,0)ds \geq 0.
\end{eqnarray}
Therefore from \ref{eq5} and \ref{eq6}, we obtain
$$ \int_{0}^{1}G(t_1,s)f(s,0)ds = 0.$$
As $G(t_1,s)f(s,0) \geq 0 $, it follows that $G(t_1,s)f(s,0) = 0$ for almost all $s\in [0,1]$. This implies that $f(s,0) = 0 $ for almost all $s\in [0,1]$, which is a contradiction.

Hence it follows that, $u(t)>0$ for all $t\in (0,1).$ Again, since $u$ is continuous on $[0,1]$, we have
$u(t)>0$ for all $t\in [0,1].$
Thus the fractional thermostat model, given by Equations \ref{equ2} and \ref{equ3}, has a unique positive solution for $\lambda \geq \frac{1}{k}$, where $k=  \beta + \frac{ \eta^{\alpha} - 1}{\Gamma(\alpha + 1)}.$
\end{proof}
\begin{theorem}\label{thm5}
Let us consider the fractional thermostat model with parameter $\lambda$ given by equations \ref{equ2} and \ref{equ3}. Assume that the following conditions hold:
\begin{enumerate}
\item[$(i)$] $\beta \Gamma(\alpha + 1)+ \eta^{\alpha}>1;$
\item[$(ii)$] for all $s\in [0,1]$,
 $$\lambda |f(s,u(s))-f(s,v(s))| \leq \lambda |f(s,u(s))| - \lambda \displaystyle \sup_{t\in [0,1]} |v(t)| -\psi( \displaystyle \sup_{t\in [0,1]} |u(t) - v(t)|),$$
for some altering distance function $\psi$,
for all $u(s), v(s)$ in the set $C= \{u(s)\in C[0,1]: 0\leq |u(s)| \leq R,\mbox{ for all  }~~s\in [0,1] \mbox { and  $R$ is a positive constant}\}$,
\item[$(iii)$]$\displaystyle \int_{0}^{1} f(s,R) ds \leq \frac{R}{\lambda k_1}$, where $k_1 = \beta + \frac{\eta^{\alpha - 1}}{\Gamma(\alpha)};$
\item[$(iv)$] $f$ is non-decreasing with respect to the second argument and there exists $t_0 \in (0,1)$ such that $f(t_0,0)>0$. 
\end{enumerate}
Then the fractional thermostat model with parameter $\lambda$ given by equations \ref{equ2} and \ref{equ3} has a unique positive solution in $C$ for $\lambda  \geq \frac{1}{k} $ where $k =\beta + \frac{ \eta^{\alpha} - 1}{\Gamma(\alpha + 1)}.$  
\end{theorem}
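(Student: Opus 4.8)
The plan is to follow the proof of Theorem \ref{thm4} almost verbatim, the one essential new ingredient being the verification that the integral operator maps the bounded set $C$ into itself, so that Theorem \ref{thm2} becomes applicable on $C$ rather than on all of $C[0,1]$. First I would observe that $C$ is a closed, and hence complete, subset of the Banach space $C[0,1]$ equipped with the sup norm, since it is cut out by the uniform bound $0\le |u(s)|\le R$. I would then define the operator $Tu(t)=\lambda\int_{0}^{1}G(t,s)f(s,u(s))\,ds$ and recall from Lemma \ref{lem1} that fixed points of $T$ correspond exactly to solutions of \ref{equ2}--\ref{equ3}. Condition $(i)$ guarantees, as in Theorem \ref{thm4}, that $k=\beta+\frac{\eta^{\alpha}-1}{\Gamma(\alpha+1)}>0$, so that the hypothesis $\lambda\ge 1/k$ makes sense and gives $\lambda k\ge 1$.

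The crucial step is to establish the invariance $T(C)\subseteq C$. For the lower bound, positivity of $\lambda$, of $G(t,s)$, and of $f(s,u(s))\ge 0$ at once yields $Tu(t)\ge 0$ for all $t$. For the upper bound I would invoke Lemma \ref{lmn4}, which supplies the uniform estimate $G(t,s)\le k_1=\beta+\frac{\eta^{\alpha-1}}{\Gamma(\alpha)}$; combining this with the monotonicity of $f$ in its second argument and the bound $u(s)\le |u(s)|\le R$, so that $f(s,u(s))\le f(s,R)$, gives
\begin{equation*}
|Tu(t)|=\lambda\int_{0}^{1}G(t,s)f(s,u(s))\,ds\le \lambda k_1\int_{0}^{1}f(s,R)\,ds\le \lambda k_1\cdot\frac{R}{\lambda k_1}=R,
\end{equation*}
where the last inequality is precisely hypothesis $(iii)$. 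Hence $0\le |Tu(t)|\le R$ for every $t\in[0,1]$, i.e. $Tu\in C$, and $T:C\to C$ is a well-defined self-map of the closed set $C$.

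With the invariance secured, the contractive estimate is produced exactly as in Theorem \ref{thm4}: multiplying hypothesis $(ii)$ by $|G(t,s)|$, integrating over $[0,1]$, bounding the integrals of $G$ from below by $\inf_{t}\int_{0}^{1}G(t,s)\,ds=k$, and using $\lambda k\ge 1$, one arrives at $\|Tu-Tv\|\le \|Tu-v\|-k\psi(\|u-v\|)$ for all $u,v\in C$. Setting $\phi(t)=t$ and $\psi_{1}(t)=k\psi(t)$, both of which are altering distance functions, this reads $\phi(\|Tu-Tv\|)\le \phi(\|Tu-v\|)-\psi_{1}(\|u-v\|)$, and Theorem \ref{thm2}, applied on the closed set $C$, delivers a unique fixed point $u\in C$.

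Finally, the positivity of this fixed point is argued as in Theorem \ref{thm4}: the same reasoning gives $u(t)\ge 0$ on $[0,1]$, and condition $(iv)$ excludes the zero function, since $u(t_{1})=0$ for some $t_{1}\in(0,1)$ would force $\int_{0}^{1}G(t_{1},s)f(s,0)\,ds=0$ and thus $f(s,0)=0$ for almost all $s$, contradicting $f(t_{0},0)>0$ together with the continuity of $f$. Therefore $u(t)>0$ on $(0,1)$, and by continuity on all of $[0,1]$. I expect the only genuinely new obstacle to be the self-mapping step $T(C)\subseteq C$; once Lemma \ref{lmn4} furnishes the uniform upper bound $k_{1}$ on the Green's function and hypothesis $(iii)$ is read as the matching smallness condition on $\int_{0}^{1}f(s,R)\,ds$, every remaining step is parallel to the proof of Theorem \ref{thm4}.
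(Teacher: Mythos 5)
Your proposal is correct and follows essentially the same route as the paper's own proof: establishing the self-mapping property $T(C)\subseteq C$ via Lemma \ref{lmn4} together with hypothesis $(iii)$, then reproducing the contractive estimate of Theorem \ref{thm4} to invoke Theorem \ref{thm2} on the closed set $C$, and finally transferring the positivity argument from Theorem \ref{thm4} using condition $(iv)$. If anything, your write-up is slightly more explicit than the paper's at the invariance step, but the substance is identical.
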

\begin{proof}
Let us take the Banach space $C[0,1]$ endowed with the sup norm. Then it is easily noticeable that $C$ is a closed subset of $C[0,1]$.

Now, for any $u(s) \in C[0,1]$ we have $$ u(s) \leq R.$$ 
The fact that $f$ is non-decreasing with respect to the second argument gives us
\begin{eqnarray*}
\displaystyle \int_{0}^{1} f(s,u(s)) ds &\leq & \displaystyle \int_{0}^{1} f(s,R)) ds \\
& \leq & \frac{R}{\lambda k_1}.
\end{eqnarray*}
Therefore,
\begin{eqnarray*}
\lambda \int_{0}^{1} G(t,s) f(s,u(s))ds & \leq & \lambda  \big( \beta + \frac{\eta^{\alpha - 1}}{\Gamma(\alpha)} \big) \displaystyle \int_{0}^{1} f(s,u(s)) ds \\
& \leq & \lambda k_1 \frac{R}{\lambda k_1},
\end{eqnarray*}
i.e., 
\begin{equation} \label{e1}
\lambda \int_{0}^{1} G(t,s) f(s,u(s)) \leq R.
\end{equation}
Next, we define a mapping $T:C \to C$  by
$$Tu(t)=\lambda \int_{0}^{1}G(t,s)f(s,u(s))ds$$ for all $u \in C[0,1]$, where $G(t,s)$ is given by Lemma \ref{lem1}.

From Equation \ref{e1} one can easily check that $T$ is well-defined on $C$. 

We now define two functions  $ \phi, \psi_1 :[0,\infty) \to [0,\infty)$  by 
\begin{eqnarray*}
\phi (t) = t\\
\psi_1(t) = k \psi(t)
\end{eqnarray*}
for all $t \in [0, \infty)$. Then it is clear that $\phi, \psi_1$ are altering distance functions.

Now proceeding as in Theorem \ref{thm4} we get 
$$\phi(\|Tu - Tv\|)  \leq  \phi(\|Tu - v\|)  - \psi_1(\|u-v\|)$$
for all $u,v \in C$ if $\lambda \geq \frac{1}{k}.$

Thus we see that all conditions of Theorem \ref{thm2} are satisfied if $\lambda  \geq \frac{1}{k} $. So by the theorem, $T$ has a unique fixed point in $C$, say, $u(t)$.

Thus, $u(t)$  is the unique solution of the fractional thermostat model given by Equations \ref{equ2} and \ref{equ3}, which follows by Lemma \ref{lem1} and the definition of $T$. The fact that $u(t)$  is positive on $[0,1]$ follows by Theorem \ref{thm4} using condition $(iv)$. Hence,  the fractional thermostat model given by Equations \ref{equ2} and \ref{equ3} satisfying the hypotheses of Theorem \ref{thm5}, has a unique positive solution for $\lambda  \geq \frac{1}{k}.$ 
\end{proof}
Now, we demonstrate an example which validates the effectiveness of the aforementioned result.
\begin{example}
Let us consider the fractional thermostat model 
\begin{equation} \label{eqn8}
^CD^{\alpha}u(t) + \lambda f(t,u(t))=0, t\in (0,1),
 \end{equation}
\begin{equation} \label{eqn9}
u'(0)=0, \beta ^CD^{\alpha - 1}u'(1) + u(\eta)=0.
\end{equation}
We choose, $$\alpha = \frac{3}{2},~~ \beta= \frac{4}{5}, ~~ \eta=\frac{1}{2} \mbox{ and } f(t,u(t))= \ln (3^{20} + t^2) + t^3 + \frac{1}{24 - u(t)}.$$
Then, $\beta \Gamma(\alpha)-(1-\eta)^{(\alpha -1)} = \frac{4}{5}.\frac{1}{2}. \sqrt{\pi} -(\frac{1}{2})^\frac{1}{2} >0.$

Clearly $f:[0,1] \times \mathbb{R} \to \mathbb{R}^+ $ is a continuous function and also $f$ is non-decreasing with respect to the second argument and there exists $ \frac{1}{2} \in (0,1)$ such that $f(\frac{1}{2},0) >0 $.

We take $$C=\{u(s) \in C[0,1] : 0\leq |u(s)| \leq 20, \mbox{  for all } s\in [0,1]\}$$ i.e., here $R=20$.

Now,
$$\beta \Gamma(\alpha + 1) + \eta^{\alpha} = \frac{4}{5}. \frac{3}{2}. \frac{1}{2}. \sqrt\pi + (\frac{1}{2})^\frac{3}{2} \approx 1.4165 >1,$$
and 
\begin{eqnarray*}
k & = & \beta + \frac{\eta^{\alpha} - 1}{\Gamma(\alpha + 1)}\\
& = & \frac{4}{5} +\frac{(\frac{1}{2})^\frac{3}{2} -1}{\frac{3}{2}. \frac{1}{2}. \sqrt\pi}\\
& \approx & 0.3135\\
\Rightarrow \frac{1}{k} & \approx & 3.1897. 
\end{eqnarray*}
\begin{eqnarray*}
k_1 & = & \beta + \frac{\eta^{\alpha} - 1}{\Gamma(\alpha )}\\
& = & \frac{4}{5} +\frac{(\frac{1}{2})^\frac{1}{2} }{ \frac{1}{2}. \sqrt\pi}\\
& \approx & 1.5981.
\end{eqnarray*}
We choose $\lambda = 3.2$ and clearly $\lambda \geq \frac{1}{k}.$

Now, 
\begin{eqnarray*}
\displaystyle \int_{0}^{1} f(s,R)ds & \leq & \frac{1}{1 + 3^{20}} - \frac{1}{3^{20}} + \frac{1}{4} + \frac{1}{4}\\
& \leq & \frac{R}{\lambda k_1}\\
& \approx & \frac{20}{3.2 \times 1.5981}\\
& \approx & 3.9109.
\end{eqnarray*}
%
Next, we define a mapping $\psi:[0,\infty) \to [0,\infty)$ by
\[\psi(t)= \left\{\begin{array}{lr}
t^2, & \mbox {if $0 \leq t <1 $};\\
1, & \mbox{ if $ t \geq 1 $}.
\end{array}
\right.\]
Then it is an easy task to note that $\psi$ is an altering distance function.

Finally, for any $u(s), v(s) \in C$ we have,
\begin{eqnarray*}
\lambda |f(s,u(s)) - f(s,v(s))| &  = & 3.2\big|\frac{1}{24 - u(s)} - \frac{1}{24 - v(s)}\big|\\
& \leq & 3.2\big(\frac{1}{4} + \frac{1}{4} \big)\\
& = & 1.6.
\end{eqnarray*}
But, 
\begin{eqnarray*}
\lambda |f(s,u(s))| - \lambda \displaystyle \sup_{t\in [0,1]} |u(t)| - \psi(\displaystyle \sup_{t\in [0,1]} |u(t) - v(t)|) & \geq & 3.2\times 21 - 3.2 \times 20 -\psi(40)\\
& = & 2.20.
\end{eqnarray*}
Therefore,
$$ \lambda |f(s,u(s)) - f(s,v(s))| \leq \lambda |f(s,u(s))| - \lambda \displaystyle \sup_{t\in [0,1]} |u(t)| - \psi(\displaystyle \sup_{t\in [0,1]} |u(t) - v(t)|)$$
for all $u(s), v(s) \in C$.
So, by Theorem \ref{thm5} the thermostat model, given by Equations \ref{eqn8} and \ref{eqn9} has a unique positive solution in $C$ for $\lambda = 3.2$. 
\end{example}
\vskip.5cm\noindent{\bf Acknowledgements:}\\
The first named author would like to express his genuine appreciation to CSIR, New Delhi, India for their financial supports. Also the third named author would like to convey his cordial thanks to DST-INSPIRE, New Delhi, India for their financial aid under INSPIRE fellowship scheme.


%

\begin{thebibliography}{10}

\bibitem{AEO}
R.P. Agarwal, M.A. El-Gebeily, and D.~O'Regan.
\newblock Generalized contractions in partially ordered metric spaces.
\newblock {\em Appl. Anal.}, 87(1):109--116, 2008.

\bibitem{AG}
Ya.I. Alber and S.~Guerre-Delabriere.
\newblock Principle of weakly contractive maps in {H}ilbert spaces.
\newblock {\em Oper. Theory Adv. Appl.}, 98:7--22, 1997.

\bibitem{AS}
I.~Altun and H.~Simsek.
\newblock Some fixed point theorems on ordered metric spaces and application.
\newblock {\em Fixed Point Theory Appl.}, 2010, 2010.
\newblock Article ID 621469.

\bibitem{AE}
A.~Amini-Harandi and H.~Emami.
\newblock A fixed point theorem for contraction type maps in partially ordered
  metric spaces and application to ordinary differential equations.
\newblock {\em Nonlinear Anal.}, 72(5):2238--2242, 2010.

\bibitem{BL1}
Z.~Bai and H.~Lu.
\newblock Positive solutions for boundary value problem of nonlinear fractional
  differential equation.
\newblock {\em J. Math. Anal. Appl.}, 311(2):495--505, 2005.

\bibitem{B1}
S.~Banach.
\newblock Sur les op\'erations dans les ensembles abstraits et leur application
  aux \'equations int\'egrales.
\newblock {\em Fund. Math.}, 3:133--181, 1922.

\bibitem{BA}
I.~Beg and M.~Abbas.
\newblock Coincidence point and invariant approximation for mappings satisfying
  generalized weak contractive condition.
\newblock {\em Fixed Point Theory Appl.}, 2006, 2006.
\newblock Article ID 74503.

\bibitem{CW}
A.~Cabada and G.~Wang.
\newblock Positive solutions of nonlinear fractional differential equations
  with integral boundary value conditions.
\newblock {\em J. Math. Anal. Appl.}, 389(1):403-411, 2012.

\bibitem{CHS}
J.~Caballero, J.~Harjani, and K.~Sadarangani.
\newblock Existence and uniqueness of positive solution for a boundary value
  problem of fractional order.
\newblock {\em Abstr. Appl. Anal.}, 2011, 2011.
\newblock Article ID 165641.

\bibitem{C2}
J.~Caristi.
\newblock Fixed point theorems for mappings satisfying inwardness conditions.
\newblock {\em Trans. Amer. Math. Soc.}, 215:241--251, 1976.

\bibitem{C}
B.S. Choudhury.
\newblock Unique fixed point theorem for weakly {C}-contractive mappings.
\newblock {\em Kathmandu Univ. J. Sci. Engg. Tech.}, 5(1):6--13, 2009.

\bibitem{C1}
LB. \'{C}iri\'{c}.
\newblock A generalization of {B}anach's contraction principle.
\newblock {\em Proc. Amer. Math. Soc.}, 45(2):267--273, 1974.

\bibitem{CSV}
LB. \'{C}iri\'{c}, B.~Samet, and C.~Vetro.
\newblock Common fixed point theorems for families of occasionally weakly
  compatible mappings.
\newblock {\em Math. Comput. Modelling}, 53(5-6):631--636, 2010.

\bibitem{DG}
R.~Dehghani and K.~Ghanbari.
\newblock Triple positive solutions for boundary value problem of a nonlinear
  fractional differential equation.
\newblock {\em Bull. Iranian Math. Soc.}, 33(2):1--14, 2007.

\bibitem{DC}
P.N. Dutta and B.S. Choudhury.
\newblock A generalisation of contraction principle in metric spaces.
\newblock {\em Fixed Point Theory Appl.}, 2008, 2008.
\newblock Article ID 406368.

\bibitem{E}
I.~Ekeland.
\newblock On the variational principle.
\newblock {\em J. Math. Anal. Appl.}, 47(2):324--353, 1974.

\bibitem{GAPV}
D.~Gopal, M.~Abbas, D.K. Patel, and C.~Vetro.
\newblock Fixed points of $\alpha$-type {F}-contractive mappings with an
  application to nonlinear fractional differential equation.
\newblock {\em Acta Math. Scientia.}, 36(3):957--970, 2016.

\bibitem{J}
D.S. Jaggi.
\newblock Some unique fixed point theorems.
\newblock {\em Indian J. Pure Appl. Math}, 8(2):223--230, 1977.

\bibitem{KSS}
M.S. Khan, M.~Swalech, and S.~Sessa.
\newblock Fixed point theorems by altering distances between the points.
\newblock {\em Bull. Austral. Math. Soc.}, 30(1):1--9, 1984.

\bibitem{MK}
A.~Meir and E.~Keeler.
\newblock A theorem on contraction mappings.
\newblock {\em J. Math. Anal. Appl.}, 28:326--329, 1969.

\bibitem{NP}
J.J. Nieto and J.~Pimentel.
\newblock Positive solutions of a fractional thermostat model.
\newblock {\em Bound. Value Probl.}, 2013:5, 2013.

\bibitem{R2}
E.~Rakotch.
\newblock A note on contractive mappings.
\newblock {\em Proc. Amer. Math. Soc.}, 13:459--465, 1962.

\bibitem{R1}
B.E. Rhoades.
\newblock Some theorems on weakly contractive maps.
\newblock {\em Nonlinear Anal.}, 47(4):2683--2693, 2001.

\bibitem{SD}
T.~Senapati and L.K. Dey.
\newblock Relation-theoretic metrical fixed point results via $w$-distance with
  applications.
\newblock {\em J. Fixed Point Theory Appl.}, 2017.
\newblock DOI: 10.1007/s11784-017-0462-9.

\bibitem{SZY}
C.~Shen, H.~Zhou, and L.~Yang.
\newblock Existence and nonexistence of positive solutions of a fractional
  thermostat model with a parameter.
\newblock {\em Math. Methods Appl. Sci.}, 39(15):4504--4511, 2016.

\end{thebibliography}
\end{document}